\DeclareMathOperator{\Stab}{Stab}
\DeclareMathOperator{\Aut}{Aut}
\DeclareMathOperator{\ord}{ord}
\DeclareMathOperator{\GL}{GL}
\DeclareMathOperator{\Deck}{Deck}
\DeclareMathOperator{\PSL}{PSL}
\def\eea{\end{eqnarray*}}
\def\bea{\begin{eqnarray*}}
\newcommand\dual{\mathrel{\raise3pt\hbox{$\underline{\mathrm{\thinspace d
\thinspace}}$}}}
\newcommand\qe{\ifhmode\unskip\nobreak\fi\quad $\Box$}       
\def\BOX{\hfill\lower.5\baselineskip\hbox{$\Box$}}
\DeclareMathOperator{\SheafHom}{\mathscr{H}\text{\kern -4pt {\textit{om}}}\,}
\DeclareMathOperator{\SheafExt}{\mathscr{E}\text{\kern -3pt {\textit{xt}}}\,}
\newtheorem{theorem}{Theorem}
\newtheorem{Theorem}[theorem]{Theorem}
\newtheorem{Proposition}[theorem]{Proposition}
\newtheorem{Corollary}[theorem]{Corollary}
\newtheorem{Lemma}[theorem]{Lemma}
\newtheorem{example}[theorem]{Example}
\numberwithin{theorem}{section}
\numberwithin{equation}{section}
\theoremstyle{definition}
\newtheorem{Remark}[theorem]{Remark}
\newtheorem{remark}[theorem]{Remark}
\newenvironment{rem}{\begin{remark}\rm}{\end{remark}}
\newtheorem{defin}[theorem]{Definition}
\newenvironment{definition}{\begin{defin}\rm}{\end{defin}}
\def\tagform@#1{\maketag@@@{\ignorespaces#1\unskip\@@italiccorr}}
\newcolumntype{H}{@{}>{\lrbox0}l<{\endlrbox}} 
\newcommand{\mylabel}[2]{#2\def\@currentlabel{#2}\label{#1}}
\begin{document}

\title{On Rigid Varieties Isogenous to a Product of Curves}
\author{ Federico Fallucca, Christian Gleissner and Noah Ruhland}

\thanks{
\textit{2020 Mathematics Subject Classification.} Primary: 14L30, Secondary: 14J10, 20H10, 20B25, 30F10, 32G05 \\
\textit{Keywords}: Beauville surface; Beauville group,  variety isogenous to a product of curves; rigid complex manifold;\\
\textit{Acknowledgements: The authors would like to express their gratitude to Ingrid Bauer, Fabrizio Catanese and Roberto Pignatelli for valuable discussions about this project. The first author held a research grant from INdAM, Istituto Nazionale di Alta Matematica.}}
\address{Federico Fallucca 
\newline University of Milano-Bicocca , Via Roberto Cozzi 55, 20126 Milano, Italy}
\email{federico.fallucca@unimib.it}
\address{Christian Gleissner 
\newline University of Bayreuth, Universit\"atsstr. 30, D-95447 Bayreuth, Germany}
\email{Christian.Gleissner@uni-bayreuth.de}
\address{Noah Ruhland 
\newline University of Bayreuth, Universit\"atsstr. 30, D-95447 Bayreuth, Germany}
\email{Noah.Ruhland@uni-bayreuth.de}

\begin{abstract}

In this note, we study rigid complex manifolds  that are realized as quotients of a product of curves by a free action of a finite group. They serve as higher-dimensional analogues of Beauville surfaces. 
Using uniformization, we outline the theory to characterize these manifolds through specific combinatorial data associated with the group under the assumption that the action is diagonal and the manifold is of general type. This leads to the notion  of a $n$-fold Beauville structure.
We define an action on the set of all $n$-fold Beauville structures of a  given finite group that allows us to distinguish the biholomorphism classes of the underlying rigid manifolds.  As an application, we give a classification of these manifolds  with group $\mathbb Z_5^2$ in the three dimensional case and prove that this is the smallest possible group that allows a rigid, free and diagonal  action on a product of three curves. In addition, we provide the classification of rigid 3-folds $X$ given by a group acting   faithfully on each factor for any value of the holomorphic Euler number   $\chi(\mathcal O_X) \geq  -5$. 
\end{abstract}

\maketitle

\section{Introduction}

 A central aspect of any kind of geometric consideration is to study the symmetries of the underlying spaces. In complex algebraic geometry, the symmetries are  biholomorphic  self-maps of projective manifolds. Given a known manifold $X$ then, in the spirit of Godeaux,  we can often construct new and interesting manifolds by taking  quotients $X/G$ 
 modulo a free action of a  finite group of automorphisms. 
 An important class of such quotients are the varieties isogenous to a product. 
 \begin{definition}\label{VarIso}
 A complex variety $X$ is isogenous to a product if it is isomorphic to a quotient 
 \[
X\simeq  (C_1 \times \ldots \times C_n)/G,
 \]
 where the $C_i's$ are compact Riemann surfaces of genus $g(C_i)\geq 1$ and $G$ is a finite group acting freely on the product $C_1\times \ldots \times C_n$. 
 We call $X$ \emph{isogenous to a higher product}, if $g(C_i) \geq 2$ for all $C_i$. 
 \end{definition} Since Catanese introduced these varieties  in 
 \cite{Cat00}, they have been  studied extensively, especially in dimension two in order to construct 
 and classify surfaces of general type and describe their moduli spaces, cf. \cite{BCG08, CP09, G15, P11, fede24}.
We point out that a  variety $X$ isogenous to a product is of general type if and only if it is isogenous to a higher product. It has been shown by Catanese that a 
 surface $S$ isogenous to a higher product has a unique minimal realization $$S\simeq (C_1\times C_2)/G.$$ It is characterized by the property that the diagonal subgroup $G_0 :=G\cap (\Aut(C_1) \times \Aut(C_2))$ acts faithfully on each curve $C_i$. \\
An easy example of a surface isogenous to a higher product  is due to Beauville \cite[Exercises X.13 (4) p.118]{B83}: consider the product of two Fermat quintics $$C=\lbrace x_0^5+x_1^5+x_2^5=0\rbrace \subset \mathbb P_{\mathbb C}^2$$ together with the action of $\mathbb Z_5^2$ defined by 
  \begin{equation}\label{BeauvillesBeauville}
  (a,b) \ast \big([x_0:x_1:x_2],[y_0:y_1:y_2]):=([\zeta_5^ax_0:\zeta_5^bx_1: x_2],[\zeta_5^{a+3b}y_0:\zeta_5^{2a+4b}y_1: y_2]\big), \quad (a,b)\in \mathbb Z_5^2. 
  \end{equation}
  Since this action is free, the quotient $$S:=(C\times C)/\mathbb Z_5^2$$ is a surface isogenous to a product. Its holomorphic Euler number is 
  $\chi(\mathcal O_S)=1$, which  is the minimal Euler number for a surface of general type. Remarkably, it turns out that $S$ is rigid, i.e. it has no non-trivial deformations. Motivated by this example Catanese defined Beauville surfaces as rigid 
 surfaces isogenous to a product \cite[Def 3.23]{Cat00}.  They are always of general type (\cite[cf. Prop 3.2]{IFG}, \cite[Thm 2.7]{BC18}) and have an 
  entirely group theoretical description in terms of a so called Beauville structure of the corresponding  group. For this reason, Beauville surfaces have been  actively studied,  not only by  algebraic geometers, but also by  group theorists, as they provide a rich framework to explore the interplay between these  disciplines (cf. \cite{BGV15}).
  The aim of this paper is to extend the theory of Beauville surfaces to higher dimensions and explore some new phenomena. 
  For this purpose we define:
  \begin{definition}
      A rigid variety isogenous to a product is called a Beauville manifold. 
  \end{definition}
  In contrast to the surface case, the geometry of   higher dimensional  Beauville manifolds is more involved. First we point out that they  are not necessarily of general type. Indeed, according to \cite[Thm 3.4 and Thm 3.5]{BC18} we have:
  \begin{itemize}
  \item 
 For all $n\geq 4$ there exists a Beauville $n$-fold of Kodaira dimension $0$.
\item 
For all $n\geq 3$ there exists a Beauville  $n$-fold of Kodaira dimension $\kappa$ for all $2\leq \kappa \leq n$.
 \end{itemize}
Our second remark is that even in the case of a higher product, we cannot assume that the diagonal subgroup 
\[
G_0 :=G\cap (\Aut(C_1) \times \ldots \times \Aut(C_n))
\]
acts faithfully on each curve $C_i$. Indeed, in \cite{FG16} the authors provide a classification of all 3-folds $X$ isogenous to a higher product of curves with  $\chi(\mathcal O_X)=-1$ under the assumption that 
the action is diagonal, i.e.  $G=G_0$ and faithful on each curve $C_i$. Among the $54$  families there are  no  rigid examples. However, dropping the faithfulness on the factors, a rigid  example with 
$\chi(\mathcal O_X)=-1$ is easy to construct as a modification of the original Beauville surface \ref{BeauvillesBeauville}: 
we take the hyperelliptic curve $D$  of genus two given by the affine equation 
$y^2=x^5-1$ and define $X$  as the quotient of $C^2\times D$ modulo the following free $\mathbb Z_5^2$-action: 
  \[
  (a,b) \ast \big([x_0:x_1:x_2],[y_0:y_1:y_2],[x,y]):=([\zeta_5^ax_0:\zeta_5^bx_1: x_2],[\zeta_5^{a+3b}y_0:\zeta_5^{2a+4b}y_1: y_2],[\zeta_5^a x,y]\big). 
  \]
Clearly, the action is not faithful on the third factor. However, the quotient $X$ is rigid and 
$\chi(\mathcal O_X)=-1$. 
This observation  serves as a motivation for the following questions about Beauville 3-folds of general type:
\begin{itemize}
    \item 
    Is $\mathbb Z_5^2$ the smallest group attached to a  Beauville $3$-fold?
    \item What is the number of  biholomorphism classes of Beauville 3-folds with group $\mathbb Z_5^2$ and Euler number $\chi(\mathcal O_X)=-1$?
    \item What is the largest integer $\chi \leq -2$ such that there exists a Beauville threefold $X$ with $\chi(\mathcal O_X)=\chi$ under the assumption that the  group acts faithfully  on each $C_i$? Is it possible to  classify these 3-folds up to biholomorphism?
\end{itemize}
For simplicity, we restrict our focus to unmixed Beauville $3$-folds, i.e. to the case where $G$ acts diagonally on the (higher) product of curves. Our results are the following: 

\begin{Theorem}\label{THMI}
The smallest group attached to an unmixed Beauville 3-fold $X$ is $\mathbb Z_5^2$. These 3-folds have either $\chi(\mathcal{O}_X)=-1$ or $-5$. There are $8$ biholomorphism classes of such 3-folds  with $\chi(\mathcal{O}_X)=-1$ and $76$ with $\chi(\mathcal{O}_X)=-5$.
\end{Theorem}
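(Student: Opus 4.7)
The plan is to translate the classification into a purely group-theoretic problem via the correspondence developed in the preceding sections, and then to enumerate and count orbits. Recall that an unmixed Beauville $3$-fold with group $G$ corresponds to a $3$-fold Beauville structure $(V_1,V_2,V_3)$ on $G$: each $V_i$ is a spherical system of generators of a quotient $G/K_i$ giving the branched covering $C_i\to C_i/G$, and the freeness condition $\Sigma(V_1)\cap\Sigma(V_2)\cap\Sigma(V_3)=\{1_G\}$ must hold. Rigidity of the resulting $3$-fold forces each $V_i$ to have length exactly three. Biholomorphism classes of $X$ correspond to orbits of the action on Beauville structures introduced earlier.

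To show that $\mathbb{Z}_5^2$ is the smallest admissible group, I would run an exhaustive check against the SmallGroups library, verifying for each $G$ with $|G|\leq 24$ that no triple of length-three spherical systems of quotients of $G$ satisfies the freeness condition. The enumeration is quick because at such small orders very few length-three spherical generating systems exist in the relevant quotients.

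Now assume $G=\mathbb{Z}_5^2$. Every non-identity element has order five, so each kernel $K_i$ is either trivial or a line. If $K_i=0$, then $V_i=(a_i,b_i,c_i)$ are non-zero vectors in $\mathbb{F}_5^2$ summing to zero and generating $G$; the $a_i,b_i$ are therefore linearly independent, and Riemann--Hurwitz yields $g(C_i)=6$. If $K_i$ is a line, then $V_i$ is a spherical triple in $G/K_i\cong \mathbb{Z}_5$, so $C_i$ has signature $(5,5,5)$, $g(C_i)=2$, and every non-identity element of $G$ has a fixed point on $C_i$; hence $\Sigma(V_i)=G$. The freeness condition thus rules out two or three non-faithful factors, and in the case of exactly one non-faithful factor (say $V_3$) it forces $\Sigma(V_1)$ and $\Sigma(V_2)$---each a union of three of the six lines of $\mathbb{P}^1(\mathbb{F}_5)$---to partition those six lines, i.e.\ $(V_1,V_2)$ is a $2$-fold Beauville structure on $G$. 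The only admissible genus triples are therefore $(6,6,6)$ and $(6,6,2)$, and the formula $\chi(\mathcal{O}_X)=-(g_1-1)(g_2-1)(g_3-1)/|G|$ yields $\chi(\mathcal{O}_X)\in\{-5,-1\}$.

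Finally, I would count biholomorphism classes by enumerating admissible triples and quotienting by the action introduced earlier: $\Aut(G)=\GL_2(\mathbb{F}_5)$, Hurwitz braid moves on each factor, and the symmetric group permuting factors of equal genus ($\mathfrak{S}_2$ for $\chi=-1$ and $\mathfrak{S}_3$ for $\chi=-5$). The $\chi=-1$ case is small enough to handle essentially by hand and produces $8$ classes. The $\chi=-5$ case, where all three components are faithful and the raw count of ordered admissible triples is large, is the main obstacle: identifying orbits requires a careful implementation of the Hurwitz moves in tandem with the $\GL_2(\mathbb{F}_5)$-- and $\mathfrak{S}_3$--actions. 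I would automate this in GAP or MAGMA to obtain the final count of $76$ orbits.
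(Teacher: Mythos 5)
Your proposal follows essentially the same route as the paper: translate to $3$-fold unmixed Beauville structures, rule out groups of order $<25$ by computer search, observe that for $\mathbb{Z}_5^2$ the kernels are trivial or lines so the genera are $6$ or $2$ (whence $\chi\in\{-5,-1\}$ with at most one non-faithful factor), and obtain the counts $8$ and $76$ by a machine computation of the orbits under $\Aut(G)\times(\mathcal B_3\wr\mathfrak S_n)$, exactly as the paper does with its MAGMA implementation of the fibre method of Section 4. The argument is correct; your explicit remark that a single non-faithful factor forces the two faithful triples to partition the six lines of $\mathbb P^1(\mathbb F_5)$ is a nice elaboration the paper leaves implicit, but it does not change the approach.
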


\begin{Theorem}\label{THMII}
The number $\mathcal N$ of biholomorphism classes of unmixed Beauville $3$-folds $X$ with holomorphic Euler number $\chi(\mathcal O_X)=\chi \in \lbrace -5, -4, -3, -2  \rbrace$, such that the corresponding group acts faithfully on each factor of the product is:  
\begin{table}[!ht]
\begin{tabular}{c|cccc}
$\chi$  & -5 & -4& -3& -2 \\
\hline
\hline
$\mathcal N$ & 77 & 8 & 0 & 1  \\
\end{tabular}
\end{table}
\end{Theorem}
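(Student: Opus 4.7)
The strategy combines the combinatorial characterisation of unmixed Beauville $3$-folds via $3$-fold Beauville structures sketched earlier in the excerpt with a systematic computer-assisted enumeration.

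First, I would reduce to bounded numerical data. For an unmixed Beauville 3-fold $X = (C_1\times C_2\times C_3)/G$ on which $G$ acts faithfully on each factor, rigidity of $X$ forces each quotient $C_i/G \cong \mathbb P^1$ to be branched in exactly three points of orders $T_i = (m_{i,1},m_{i,2},m_{i,3})$. Setting $\Theta_i := 1 - \sum_{j=1}^3 1/m_{i,j}$, Riemann--Hurwitz gives $g(C_i)-1 = |G|\,\Theta_i/2$, whence
\[
\chi(\mathcal O_X) \;=\; -\frac{\prod_{i=1}^{3}\bigl(g(C_i)-1\bigr)}{|G|} \;=\; -\frac{|G|^2}{8}\,\Theta_1\Theta_2\Theta_3.
\]
Since $1/42 \le \Theta_i < 1$, each target value $\chi \in \{-5,-4,-3,-2\}$ forces an explicit upper bound on $|G|$ (of the order of $\sqrt{8|\chi|\cdot 42^3}$) and, for every such $|G|$, only finitely many admissible triples of signatures $(T_1,T_2,T_3)$.

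Second, for every admissible $(|G|,T_1,T_2,T_3)$ one iterates through the isomorphism types of groups of that order via the Small Groups Library and enumerates all triples $V_i = (v_{i,1},v_{i,2},v_{i,3})$ of spherical systems of generators of $G$ with $\ord(v_{i,j}) = m_{i,j}$ and $v_{i,1}v_{i,2}v_{i,3}=e$. Among these one retains those satisfying the freeness condition: for every non-trivial $g\in G$ there exists an index $i$ such that $g$ is not conjugate in $G$ to any non-trivial power of an element of $V_i$. These are precisely the unmixed $3$-fold Beauville structures with faithful diagonal action. By the equivalence introduced earlier in the paper, two such data produce biholomorphic $3$-folds exactly when they lie in the same orbit of the combined action of $\Aut(G)$, the Hurwitz moves and allowed inversions on each individual signature, and the permutation action of $S_3$ on the three factors. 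Enumerating these orbits produces the entries $77,\ 8,\ 0,\ 1$ of the table.

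The main obstacle is computational control: the bound above permits groups of order up to roughly $10^3$, so naive enumeration is infeasible and efficient pruning is essential. Useful reductions include: abelianisation constraints imposed by the product relation $v_{i,1}v_{i,2}v_{i,3}=e$, elementary restrictions forcing every prime dividing any $m_{i,j}$ to divide $|G|$, and early aborts in the freeness test whenever a candidate $V_i$ already fails to be free against all earlier choices of $V_1,V_2$. Cross-checking the output against the known $\chi=-1$ case of Theorem \ref{THMI} and against the families of \cite{FG16} provides a consistency test. The most delicate point is certifying the negative statement $\mathcal N = 0$ for $\chi=-3$: this requires a complete and verifiable traversal of every admissible $(|G|,T_1,T_2,T_3)$ to confirm that no candidate triple $(V_1,V_2,V_3)$ survives both the generation and the freeness conditions.
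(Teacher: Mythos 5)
Your proposal is correct and follows essentially the same route as the paper: bound $|G|$ via the Hurwitz inequality combined with $\chi(\mathcal O_X)=-\prod(g_i-1)/|G|$ (your bound $|G|\le 42\sqrt{336|\chi|}$ coincides with the paper's $\lfloor 168\sqrt{-21\chi}\rfloor$), enumerate admissible signatures and groups, extract spherical generating triples satisfying the freeness condition $\bigcap_i\Sigma_{S_i}=\{1_G\}$, and count orbits under $\Aut(G)\times(\mathcal B_3\wr\mathfrak S_3)$. The only differences are implementation-level: the paper prunes with the divisibility constraints $(g_i-1)\mid N\chi$, $m_{i,j}\mid(g_{[i+1]}-1)(g_{[i+2]}-1)$ and $m_{i,j}\le 4g_i+2$ from \cite{FG16}, and computes the orbits via the braid-type automorphism machinery of Section~\ref{sec: computation_o_the_fibre} together with the database of \cite{CGP23}, rather than by direct orbit enumeration.
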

We will now explain how the paper is organized. In Section~\ref{generalities}, we 
recall the basic theory of varieties isogenous to a product of curves and introduce Beauville manifolds. Crucial for our analysis  is the existence and uniqueness of a minimal realization of a Beauville manifold or more generally of a variety isogenous to a higher product of curves of unmixed type. Even though this result is well-known in the surface case \cite[Cor. 3.12 and Prop. 3.13] {Cat00} and folklore in higher dimensions, we decided to include a proof, since we could not find a reference for dimension $n\geq 3$. 
 In Section \ref{grouptheo}, we use Riemann's existence theorem to give a purely group theoretical description of unmixed Beauville manifolds of general type. More precisely, we explain how to attach to the minimal realization of a given Beauville $n$-fold an \emph{$n$-fold 
Beauville structure} of the corresponding group $G$.
Then we provide a natural action of the group $\Aut(G)\times (\mathcal B_3 \wr \mathfrak S_n)$  on the set $\mathcal {UB}_n(G)$ of all (unmixed) Beauville structures of $G$, where $\mathcal B_3$ is the Artin-Braid group on three strands. 
We show that the biholomorphism classes of 
unmixed Beauville $n$-folds are in $1:1$ correspondence with the orbits of this action.
Unfortunately, for certain  groups, it can be very difficult and computationally expensive to determine these orbits.
This difficulty is resolved in  
Section \ref{sec: computation_o_the_fibre}, where we present an effective method to compute the orbits, that  extends the \emph{automorphism of Braid type} approach of  \cite[Section 1.2]{fede24} from surfaces to arbitrary dimensions. These results allow us to employ the \emph{Database of topological types of actions on curves} of \cite{CGP23} for explicit computations.
In Section \ref{section: BeauvilleDim}, we discuss the notion of the Beauville dimension of a finite group $G$, which is the minimum dimension of a  Beauville manifold with group $G$. 
This concept was introduced by Carta and Fairbairn in \cite{CF22} under the assumption that the  group acts faithfully on each factor. 
We drop this assumption and show that the group $\mathbb Z_n^3$ has Beauville dimension three if and only if $\gcd(n,6)=1$. 
In Section \ref{section: Examples}, we prove our main theorems using a  MAGMA \cite{BCP97} implementation of the algorithm from Section  \ref{sec: computation_o_the_fibre}. 
 The reader can find a MAGMA implementation on the webpage 
\begin{center}
\url{https://www.komplexe-analysis.uni-bayreuth.de/de/team/gleissner-christian/index.php}.
\end{center}

\section{Generalities on Beauville manifolds}\label{generalities}

In this section, we present the basic theory of varieties isogenous to a product of curves via uniformization. More precisely, we extend the results \cite[Cor. 3.9, Prop. 3.11, Cor. 3.12, Prop. 3.13]{Cat00} from Catanese on surfaces, which were established using different methods. 

In particular, we focus on Beauville manifolds, which are the rigid varieties isogenous to a product of curves. In higher dimensions they serve as natural generalizations of Beauville surfaces.


Recall that a variety $X$ isogenous to a product is smooth and  projective. The Kodaira dimension $\kappa(X)$ is equal to the number of curves of genus $g(C_i) \geq 2$ in the product $C_1\times \ldots \times C_n$. 
The $n$-fold self-intersection of the canonical class $K_X^n$, the topological Euler number $e(X)$ and the holomorphic Euler number $\chi(\mathcal O_X)$ are given 
 in terms of the genera $g(C_i)$ and the order of the  group. 

\begin{Proposition}\label{Global_Invar}
Let $X\simeq (C_1 \times \ldots \times C_n)/G$ be a variety isogenous to a product, then 
\[
\chi(\mathcal O_X)=\frac{(-1)^n}{\vert G\vert} \prod_{i=1}^{n}\big(g(C_i)-1\big), \quad K_X^n=(-1)^n n! 2^n\chi(\mathcal O_X) \quad \makebox{and} \quad e(X)=2^n\chi(\mathcal O_X). 
\]
\end{Proposition}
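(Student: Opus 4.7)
The plan is to pass to the étale cover $Y := C_1 \times \ldots \times C_n \to X$ of degree $|G|$ and invoke the multiplicativity of all three invariants under unramified covers, reducing the problem to a direct computation on the product $Y$.

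First I would observe that, since $G$ acts freely, the quotient map $\pi \colon Y \to X$ is an étale covering of degree $|G|$. Consequently $\pi^*K_X = K_Y$, so $K_Y^n = |G| \cdot K_X^n$; the topological Euler number satisfies $e(Y) = |G| \cdot e(X)$ by the standard additive/multiplicative properties of $e$; and the holomorphic Euler characteristic satisfies $\chi(\mathcal{O}_Y) = |G| \cdot \chi(\mathcal{O}_X)$, either by noting that $\pi_* \mathcal{O}_Y$ is locally free of rank $|G|$ so Leray gives $\chi(\mathcal{O}_Y) = \chi(\pi_*\mathcal{O}_Y)$ and Riemann--Roch--Hirzebruch then yields the factor $|G|$, or equivalently by applying Hirzebruch--Riemann--Roch together with $c_\bullet(Y) = \pi^* c_\bullet(X)$. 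It therefore suffices to compute the three invariants on $Y$.

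Next I would compute the invariants of the product $Y$. For the holomorphic Euler characteristic, Künneth gives
\[
\chi(\mathcal{O}_Y) \;=\; \prod_{i=1}^n \chi(\mathcal{O}_{C_i}) \;=\; \prod_{i=1}^n \bigl(1-g(C_i)\bigr) \;=\; (-1)^n \prod_{i=1}^n \bigl(g(C_i)-1\bigr),
\]
which, divided by $|G|$, yields the first formula. For the topological Euler number,
\[
e(Y) \;=\; \prod_{i=1}^n e(C_i) \;=\; \prod_{i=1}^n \bigl(2-2g(C_i)\bigr) \;=\; 2^n (-1)^n \prod_{i=1}^n \bigl(g(C_i)-1\bigr) \;=\; 2^n \chi(\mathcal{O}_Y),
\]
and the formula $e(X) = 2^n \chi(\mathcal{O}_X)$ follows after dividing by $|G|$.

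Finally, for $K_X^n$, I would write $K_Y = \sum_{i=1}^n p_i^*K_{C_i}$ where $p_i \colon Y \to C_i$ are the projections, and expand
\[
K_Y^n \;=\; \Bigl(\sum_{i=1}^n p_i^* K_{C_i}\Bigr)^{n}.
\]
Since each $C_i$ is a curve, $K_{C_i}^2 = 0$, so only the multinomial terms in which every $p_i^* K_{C_i}$ appears with exponent exactly one survive; there are $n!$ such terms, each contributing $\prod_i \deg K_{C_i}$. Hence
\[
K_Y^n \;=\; n!\, \prod_{i=1}^n \bigl(2g(C_i)-2\bigr) \;=\; n!\, 2^n \prod_{i=1}^n \bigl(g(C_i)-1\bigr) \;=\; (-1)^n n!\, 2^n\, \chi(\mathcal{O}_Y),
\]
and dividing by $|G|$ gives the claimed identity for $K_X^n$. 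No step really constitutes a serious obstacle; the only minor subtlety is justifying that $\chi(\mathcal{O})$ is multiplicative under étale covers, which I would handle via Leray/HRR as above.
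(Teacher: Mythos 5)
Your proof is correct and is exactly the standard argument that the paper implicitly relies on: the paper states Proposition \ref{Global_Invar} without proof, treating it as the well-known computation via the \'etale cover $C_1\times\ldots\times C_n\to X$ of degree $|G|$, multiplicativity of $\chi(\mathcal O)$, $e$ and $K^n$ under unramified covers, K\"unneth, and the multinomial expansion of $\bigl(\sum_i p_i^*K_{C_i}\bigr)^n$. All steps and constants check out.
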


\begin{definition}
    A rigid variety isogenous to a product is called a Beauville manifold. 
\end{definition}

\begin{remark}
    A Beauville surface is always isogenous to a higher product, i.e. of general type (\cite[cf. Prop. 3.2]{IFG} and \cite[Thm. 2.7]{BC18}).  This is not true anymore for Beauville manifolds of higher dimension:
    \begin{enumerate}
\item 
For all $n\geq 4$ there exists a Beauville $n$-fold of Kodaira dimension $0$ (see \cite[Thm. 3.4]{BC18}).
\item 
For all $n\geq 3$ there exists a Beauville  $n$-fold of Kodaira dimension $\kappa$ for all $2\leq \kappa \leq n$ (see \cite[Thm. 3.5]{BC18}).
 \item There is no rigid and free action on a 3-dimensional complex torus. In particular, there are no
 Beauville $3$-folds of Kodaira dimension $0$. (see \cite[Thm. 1.1(a)]{DG23}).
 \item 
 The existence of Beauville manifolds of Kodaira dimension 1 is still an open question. However, 
 there are no  such manifolds if the action is   diagonal and  faithful  on each factor, cf. Definition \ref{defn: absolutely_faithful}. For this result, see \cite[Cor. 3.11]{BGK25}. 
 \item 
  The existence of rigid 3-folds of Kodaira dimension 0 and 
 rigid $n$-folds of Kodaira dimension 1 for all $n\geq 3$ is known (see \cite{B82} and \cite{BG20}). They are obtained as certain resolutions of quotients of a product of curves by a  non-free action.  
  \end{enumerate}
\end{remark}
In this paper, we are mainly interested in the special case where $X$ is isogenous to a higher product of curves. In order to study group actions on a product $C_1 \times \ldots \times  C_n$ of compact Riemann surfaces with $g(C_i) \geq  2$, it is important to understand the structure of the automorphism group of the product. This group has a simple description in terms of the automorphism groups $\Aut(C_i)$ of the factors, thanks to the lemma below: 

\begin{Lemma}\label{rigidity-lemma}
Let $D_1, \ldots ,D_k$ be pairwise non-isomorphic compact Riemann surfaces with $g(D_i) \geq 2$. 
Then for all positive integers $n_1, \ldots ,n_k$ it holds:  
\[
\Aut(D_1^{n_1} \times \ldots \times D_k^{n_k})= (\Aut(D_1) \wr 
\mathfrak S_{n_1}) \times \ldots \times 
(\Aut(D_k) \wr  \mathfrak S_{n_k}). 
\]
Here $\Aut(D_i) \wr 
\mathfrak S_{n_i}=\Aut(D_i)^{n_i} \rtimes
\mathfrak S_{n_i}$ denotes the \emph{wreath product}. 
\end{Lemma}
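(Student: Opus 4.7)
The plan is to exploit that every factor has genus $\geq 2$, hence is of general type and Kobayashi hyperbolic, in order to force any automorphism of the product to permute its Cartesian coordinates while preserving the isomorphism-classes of the factors. Once such a permutation-plus-coordinate-automorphism decomposition is obtained, the claimed wreath product structure falls out immediately, since the reverse inclusion is obvious.

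The main input I would isolate first is the following rigidity/factorization lemma: if $X_1, X_2$ are compact connected complex manifolds and $C$ is a compact Riemann surface with $g(C) \geq 2$, then every holomorphic map $f \colon X_1 \times X_2 \to C$ either is constant or factors through exactly one of the two projections. The quickest proof is a continuity argument: the space $\mathrm{Hom}(X_1, C)$ decomposes as the connected locus of constant maps together with a discrete set of non-constant maps, the discreteness being a consequence of de Franchis--Severi rigidity for morphisms into curves of general type (equivalently, of the hyperbolicity of $C$). Hence the family $\{f(\cdot, x_2)\}_{x_2 \in X_2}$ is either constant in $x_2$, giving $f = g \circ \mathrm{pr}_1$, or consists entirely of constant maps, giving $f = h \circ \mathrm{pr}_2$. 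This dichotomy extends inductively to any morphism $X_1 \times \ldots \times X_r \to C$, producing a single projection through which $f$ factors.

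Next I would apply this to an arbitrary $\phi \in \Aut(D_1^{n_1} \times \ldots \times D_k^{n_k})$. Writing the factors with multiplicity as $F_1, \ldots, F_N$ (so $F_j = D_i$ whenever $j$ lies in the $i$th block) and letting $p_j$ denote the $j$th projection, the morphism $p_j \circ \phi$ is non-constant for every $j$, hence factors through a unique projection $p_{\sigma(j)}$ via some $\alpha_j \colon F_{\sigma(j)} \to F_j$. Invertibility of $\phi$ forces $\sigma$ to be a permutation of $\{1, \ldots, N\}$ and each $\alpha_j$ to be an isomorphism of Riemann surfaces. The hypothesis that $D_1, \ldots, D_k$ are pairwise non-isomorphic then forces $\sigma$ to preserve the block partition, so $\sigma \in \frS_{n_1} \times \ldots \times \frS_{n_k}$ and each $\alpha_j \in \Aut(F_j)$. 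Reading off $\phi(x_1, \ldots, x_N) = (\alpha_1(x_{\sigma(1)}), \ldots, \alpha_N(x_{\sigma(N)}))$ yields exactly the right-hand side of the claimed identity.

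The principal obstacle is the rigidity lemma, whose cleanest formulation depends on which classical input one invokes. Either Brody/Kobayashi hyperbolicity of $C$ or the de Franchis--Severi finiteness theorem for $\mathrm{Hom}(X, C)$ modulo constants work directly; a concrete elementary substitute is to note that a nontrivially varying family $f(\cdot, x_2)$ would differentiate in $x_2$ to a nonzero section of $f^*T_C$ along the fibres of $\mathrm{pr}_2$, contradicting $H^0(C, T_C) = 0$. Once the lemma is in hand, the rest of the argument is a purely combinatorial bookkeeping of $\sigma$ against the block structure dictated by the pairwise non-isomorphism hypothesis.
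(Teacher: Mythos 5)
Your proof is correct but takes a genuinely different route from the paper's. The paper's argument is very short: it lifts an automorphism of the product to the universal cover, which is the polydisc $\Delta^N$, invokes the classical fact that $\Aut(\Delta^N)=\Aut(\Delta)\wr\frS_N$ (citing Narasimhan), and lets the claim follow by descent. You instead work intrinsically on the compact product: de Franchis--Severi rigidity (equivalently, hyperbolicity of $C$) makes the non-constant locus of $\Hom(X_1,C)$ discrete, so connectedness of the parameter space forces each $p_j\circ\phi$ to factor through exactly one coordinate projection; surjectivity of $\phi$ then makes the index map $\sigma$ a permutation, and the pairwise non-isomorphism hypothesis confines $\sigma$ to the blocks, with each component $\alpha_j$ an isomorphism. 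This is essentially the rigidity-lemma argument Catanese uses in the surface case; it buys self-containedness (no uniformization needed, and it would adapt to more general factors with no non-constant maps between distinct ones), whereas the paper's proof is shorter at the cost of outsourcing everything to the structure theorem for $\Aut(\Delta^N)$ and leaving the descent step implicit. One small caveat in your writeup: the ``elementary substitute'' at the end should contradict $H^0(X_1,g^*T_C)=0$ for a \emph{non-constant} $g\colon X_1\to C$ (which holds because $T_C$ has negative degree), not $H^0(C,T_C)=0$ itself, and it also needs the preliminary remark that the set of $x_2$ with $f(\cdot,x_2)$ non-constant is open with analytic complement; your primary justification via the component structure of $\Hom(X_1,C)$ is the cleaner one and is the one to keep.
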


\begin{proof}
    Any automorphism $\varphi \in \Aut(D_1^{n_1} \times \ldots \times D_k^{n_k})$ lifts to an automorphism 
    $\hat{\varphi}$ of the universal cover, which is a product of unit discs $\Delta^n$. 
    The claim follows from the well known fact that 
    \[
    \Aut(\Delta^n) = \Aut(\Delta) \wr \mathfrak S_n. 
    \]
    See,  
    \cite[Proposition 3, p.68]{N71}.  
\end{proof}

The above lemma motivates the following definition: 

\begin{definition}
A  $n$-dimensional variety $X$ isogenous to a higher product is said to be of unmixed type, if there is a realization 
$X\simeq (C_1 \times \ldots \times C_n)/G$, 
such that $G\leq \Aut(C_1)\times \ldots \times \Aut(C_n)$. Otherwise, we say that $X$ is of mixed type. 
\end{definition}

\begin{Proposition}
    A variety $X$ is isogenous to a higher product of curves if and only if there exists an unramified cover 
    \[
   f\colon  C_1 \times \ldots \times C_n \to X, \qquad \makebox{where} \quad g(C_i) \geq 2. 
    \]
\end{Proposition}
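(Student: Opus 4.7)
The forward direction is immediate from the definition: if $X\simeq (C_1\times\ldots\times C_n)/G$ with $G$ finite, acting freely, and each $g(C_i)\geq 2$, the quotient map $C_1\times\ldots\times C_n \to X$ is the required unramified cover.

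For the converse, suppose $f\colon Y:=C_1\times\ldots\times C_n \to X$ is an unramified finite cover with $g(C_i)\geq 2$. The plan is to combine uniformization with Lemma~\ref{rigidity-lemma}. Since each $C_i$ is uniformized by the unit disk $\Delta$, the universal cover of $Y$ is the polydisk $\Delta^n$; composing with the étale map $f$, the same polydisk is a covering of $X$, and by simple-connectedness it is the universal cover of $X$. Thus $\Pi:=\pi_1(X)$ acts freely and properly discontinuously on $\Delta^n$ and embeds as a torsion-free subgroup of $\Aut(\Delta^n) = \Aut(\Delta)\wr\mathfrak{S}_n$. The subgroup $L:=\pi_1(Y)=\pi_1(C_1)\times\ldots\times\pi_1(C_n)$ lies inside the diagonal factor $\Aut(\Delta)^n$, and has finite index in $\Pi$ because $f$ is a finite cover.

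The main step is to replace $L$ by a finite-index normal subgroup $N\trianglelefteq\Pi$ that is itself a direct product of subgroups of $\Aut(\Delta)$. I would take the normal core $N:=\bigcap_{h\in\Pi} hLh^{-1}$, which is automatically normal and of finite index in $\Pi$. The decisive point, which I regard as the technical heart of the argument, is that conjugation in the wreath product sends a product subgroup to a product subgroup: a direct calculation in $\Aut(\Delta)\wr\mathfrak{S}_n$ shows that for $h=(a_1,\ldots,a_n;\sigma)$,
\[
hLh^{-1} = \bigl(a_1\,\pi_1(C_{\sigma^{-1}(1)})\,a_1^{-1}\bigr)\times\cdots\times\bigl(a_n\,\pi_1(C_{\sigma^{-1}(n)})\,a_n^{-1}\bigr).
\]
Since an intersection of product subgroups splits componentwise, $N$ decomposes as $K_1\times\ldots\times K_n$ with $K_i\leq\pi_1(C_i)$. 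Finiteness of $[L:N]=\prod_i[\pi_1(C_i):K_i]$ forces each $K_i$ to have finite index in $\pi_1(C_i)$, and torsion-freeness of $\Pi$ is inherited by $K_i$. Hence $D_i:=\Delta/K_i$ is a smooth compact Riemann surface with $g(D_i)\geq 2$, covering $C_i$ unramifiedly.

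To finish, set $G:=\Pi/N$, a finite group. Because $\Pi$ acts freely on $\Delta^n$, the induced action of $G$ on $\Delta^n/N = D_1\times\ldots\times D_n$ is also free, and the quotient is $X$. This exhibits $X\simeq (D_1\times\ldots\times D_n)/G$ as a variety isogenous to a higher product. The only genuinely non-formal ingredient is the wreath-product conjugation identity above together with the resulting componentwise splitting of the normal core; everything else is standard covering-space theory and Riemann--Hurwitz.
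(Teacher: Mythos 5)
Your proof is correct and follows essentially the same route as the paper's: uniformize by the polydisc, identify $\pi_1(X)$ as a subgroup of $\Aut(\Delta)\wr\mathfrak S_n$ containing $\pi_1(C_1)\times\ldots\times\pi_1(C_n)$ with finite index, observe that conjugation permutes and conjugates the factors so that the normal core splits as a product $K_1\times\ldots\times K_n$, and pass to the resulting finite Galois cover by a product of curves. Your write-up is if anything slightly more complete, since it also records the (immediate) forward direction and checks freeness and the genus bound for the quotient curves explicitly.
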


\begin{proof}
    Assume that $f$ exists, then the universal cover of $X$ is the polydisc $\Delta^n$ and the cover $f$ is induced by a finite index subgroup 
    \[
    \Gamma_1 \times \ldots \times \Gamma_n < \Gamma, 
    \]
  where $\Gamma < \Aut(\Delta^n)$  is the  fundamental group of $X$ and $\Gamma_i$ the fundamental group of $C_i$. An element of  $ \Gamma$ is of the form 
  \[
  \gamma_{\tau}(z_1,\ldots, z_n)=(\gamma_1(z_{\tau(1)}), \ldots, \gamma_n(z_{\tau(n)})), 
  \]
  for some permutation $\tau\in \mathfrak S_n$ and automorphisms $\gamma_i \in \Aut(\Delta)$. 
  This implies 
  \[
   \gamma_{\tau} (\Gamma_1\times \cdots \times \Gamma_n)\gamma_{\tau}^{-1}=
   \gamma_1\Gamma_{\tau(1)} \gamma_1^{-1} \times \cdots \times \gamma_n\Gamma_{\tau(n)} \gamma_n^{-1}. 
  \]
  The normal core i.e. the largest subgroup of $\Gamma_1\times \cdots \times \Gamma_n$ that is normal in $\Gamma$ is therefore given by 
\[
{\rm core}_\Gamma(\Gamma_1\times \cdots \times \Gamma_n)= \bigcap_{\gamma_{\tau} \in \Gamma } \gamma_{\tau} (\Gamma_1\times \cdots \times \Gamma_n)\gamma_{\tau}^{-1}= \prod_{i=1}^n\bigcap_{\gamma_\tau \in \Gamma} \gamma_i\Gamma_{\tau(i)}\gamma_i^{-1}.
\]
Since the core of a finite index subgroup has finite index, we obtain a finite Galois cover 
\[
\Delta/\Gamma_1' \times \cdots \times \Delta/\Gamma_n' \to X, \quad \makebox{where} \quad \Gamma_i':= \bigcap_{\gamma_\tau \in \Gamma} \gamma_i\Gamma_{\tau(i)}\gamma_i^{-1}.
\]
\end{proof}

We want to point out that a realization of a variety $X$  isogenous to a higher product as a quotient 
$(C_1 \times \ldots \times C_n)/G$
is in general  not unique. However, we can 
always find a so called  \emph{minimal realization} which is unique up to isomorphism. 
In this paper, we want to stick to the case where 
$X$ is of unmixed type. Then, given a realization, we obtain $G$-actions 
$\psi_i \colon G \to \Aut(C_i)$
on the factors which are not 
 necessarily faithful. We denote by  $K_i$ the kernel of $\psi_i$ and define 
the quotient $G/K_i$, which then acts faithfully on $C_i$ as $\overline{G}_i$.

\begin{definition}\label{defn: absolutely_faithful}
    A diagonal $G$-action on $C_1\times \ldots \times C_n$ is called 
    \begin{enumerate}
    \item minimal, if $K_1 \cap \ldots \cap \widehat{K}_i \cap \ldots \cap K_n=\lbrace 1_G \rbrace$ for all $i$. 
\item absolutely faithful, if all kernels $K_i$  are trivial. 
\end{enumerate}
\end{definition}
Clearly, an absolutely faithful action is also minimal and in dimension two the notions coincide.

\begin{Theorem}\label{minimalreal}
Every variety isogenous to a higher product of curves of unmixed type  has a unique minimal realization, i.e. a realization obtained by a minimal action. 
\end{Theorem}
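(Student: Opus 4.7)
The argument splits into an existence part, obtained by iteratively reducing an arbitrary realization, and a uniqueness part, derived from the intrinsic description of realizations in terms of the fundamental group.

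For \emph{existence}, start with any unmixed realization $X\simeq (C_1\times \cdots \times C_n)/G$ with kernels $K_i$. If the realization is not minimal, there is some index $i$ with $H_i:=\bigcap_{j\neq i} K_j\neq \lbrace 1_G\rbrace$. The subgroup $H_i$ is normal in $G$, and any nontrivial $h\in H_i$ acts as the identity on each $C_j$ with $j\neq i$; since the $G$-action on the product is free, $h$ must act freely on $C_i$. Hence $C_i':=C_i/H_i$ is smooth, and Riemann--Hurwitz applied to the unramified cover $C_i\to C_i'$ yields $g(C_i')\geq 2$. The quotient $G/H_i$ then acts diagonally and freely on $C_1\times \cdots \times C_i'\times \cdots \times C_n$ with the same quotient $X$. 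Since $|G/H_i|<|G|$, iterating this reduction terminates after finitely many steps at a minimal realization.

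For \emph{uniqueness}, pass to the universal cover $\Delta^n$ of $X$. By Lemma~\ref{rigidity-lemma} and the unmixedness hypothesis, $\Gamma:=\pi_1(X)$ may be viewed as a subgroup of $\Aut(\Delta)^n$. Every unmixed realization is obtained from torsion-free cocompact subgroups $\Lambda_i\trianglelefteq p_i(\Gamma)$ (with $p_i$ the $i$-th projection) satisfying $\Lambda_1\times\cdots\times \Lambda_n\trianglelefteq \Gamma$, by setting $C_i=\Delta/\Lambda_i$ and $G=\Gamma/\prod_i\Lambda_i$. Define
\[
\Lambda_i^{\max}:=\bigl\{\, g\in \Aut(\Delta)\,:\,(1,\ldots,g,\ldots,1)\in \Gamma\,\bigr\}
\]
with $g$ in the $i$-th slot; this subgroup depends only on $\Gamma$, hence intrinsically on $X$. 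A direct computation then shows that the realization attached to $(\Lambda_1,\ldots,\Lambda_n)$ is minimal if and only if $\Lambda_i=\Lambda_i^{\max}$ for every $i$: if $\Lambda_i\subsetneq \Lambda_i^{\max}$, any $g\in \Lambda_i^{\max}\setminus \Lambda_i$ yields a nontrivial class in $\bigcap_{j\neq i}K_j$; conversely, if $\gamma=(\gamma_1,\ldots,\gamma_n)\in \Gamma$ has $\gamma_j\in \Lambda_j^{\max}$ for $j\neq i$, peeling off the elements $(1,\ldots,\gamma_j,\ldots,1)\in \Gamma$ for $j\neq i$ forces $\gamma_i\in \Lambda_i^{\max}$. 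Hence the minimal realization coincides with the canonical tuple $(\Lambda_1^{\max},\ldots,\Lambda_n^{\max})$ and is unique.

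The main obstacle is the uniqueness step, specifically ensuring that $\Gamma$ really embeds into $\Aut(\Delta)^n$ with no permutation factor; this is precisely what unmixedness guarantees via Lemma~\ref{rigidity-lemma}, together with the fact that the polydisc splitting of the universal cover is canonical. Once this is in place, the group-theoretic characterization via $\Lambda_i^{\max}$ shows that both the existence construction and any other minimal realization land at the same canonical object, completing the proof.
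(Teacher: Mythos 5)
Your proof is correct and follows essentially the same route as the paper: existence by quotienting out the redundant normal subgroups $H_i=\bigcap_{j\neq i}K_j$ (you do this one index at a time, the paper in a single step via $H=H_1\cdots H_n$), and uniqueness by passing to the deck group $\Gamma\leq\Aut(\Delta)^n$ and using that minimality forces the surface groups to be exactly the ``single-slot'' subgroups $\Lambda_i^{\max}$ of $\Gamma$. The only difference is packaging: you state this as an intrinsic characterization ($\Lambda_i=\Lambda_i^{\max}$), while the paper runs the identical computation on the element $f_{\ast}(\gamma_1,1,\ldots,1)$ after lifting a biholomorphism between two minimal realizations.
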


To prepare for the proof of this theorem, we need to recall the structure of the fundamental group of a variety isogenous to a higher product of unmixed type, cf. \cite{DP12}.

\begin{remark}\label{structureFund}
   Let $(C_1\times \ldots \times C_n)/G$ be a not necessarily minimal realization 
of a variety $X$ isogenous to a higher product of curves of unmixed type.  Considering the universal cover $\pi_i \colon \Delta \to C_i$, we obtain the short exact sequence 
        \[ 1 \to \Gamma_i \to \mathbb T_i \stackrel{\rho_i}{\to} \overline{G}_i \to 1. \]
    As above $\Gamma_i$ is isomorphic to  the fundamental group of $C_i$ and   $\mathbb T_i$ is the group of all possible  lifts of the elements in $\overline{G}_i$, i.e. the orbifold fundamental group: 
        \[ \mathbb T_i = \{ \gamma \in \Aut(\Delta) ~ | ~ \textrm{\ exists \ }\overline{g}\in \overline{G}_i  \textrm{ \  such that \ } \pi_i\circ \gamma= \psi_i(g)\circ \pi_i\}, \]
            see \cite[Chapter 6]{Cat15} for an in depth discussion of orbifold fundamental groups. 
   Similarly, we  take the universal cover $\Delta^n \to C_1 \times \ldots \times C_n$ and get the short exact sequence
        \[ 1 \to \Gamma_1 \times \ldots \times \Gamma_n \to \Gamma \to G \to 1. \]
   The group  $\Gamma$ consists  of the lifts of the  elements in $G$ to  $\Aut(\Delta^n)$. It is 
     isomorphic to the fundamental group of $X$, because the action of $G$ is free. Since $G$ acts diagonally on the product of curves, an automorphism $g \in G$ lifts to an automorphism $(\gamma_1 , \ldots, \gamma_n) \in \Aut(\Delta)^n$, if and only if each $\psi_i(g)$ lifts to $\gamma_i \in \Aut(\Delta)$. Therefore, we can write $\Gamma$ in the following way:
        \[ \Gamma = \{ (\gamma_1,\ldots,\gamma_n) \in \mathbb T_1 \times \ldots \times \mathbb T_n ~ | ~ \overline{g} = \rho_i(\gamma_i) \in \overline{G}_i \text{ for all } 1 \leq i \leq n \text{ and some } g \in G \}.  \]
\end{remark}

\begin{proof}[Proof of Theorem \ref{minimalreal}]
(I) To show the  existence of a minimal realization, we start with an  arbitrary  realization 
$$X\simeq \frac{C_1\times \ldots\times C_n}{G}.$$ For  each $i$, we consider  the normal subgroups 
$H_i:=K_1 \cap \ldots \cap \widehat{K}_i \cap \ldots \cap K_n
\trianglelefteq G$. 
Note that $H_i$ acts trivially on $C_j$ for all  $i\neq j$ and freely on $C_i$.  In particular, the genus of the quotient curve
$C_i/H_i$ is at least $2$. 
We take the product  $H:=H_1\cdot \ldots \cdot H_n \trianglelefteq G$ of our normal subgroups  and form 
the double  quotient 
\[
X\simeq \frac{(C_1\times \ldots\times C_n)/H}{G/H} \simeq 
\frac{C_1/H_1 \times \ldots \times C_n/H_n}{G/H}. 
\]
By construction, the induced $G/H$-action on the product $C_1/H_1\times \ldots \times C_n/H_n$ is minimal. \\
(II) To prove the uniqueness, we consider a biholomorphism between two minimal realizations of a variety $X$ isogenous to a higher  product: 
\[
f\colon \frac{C_1\times \ldots\times C_n}{G} \to \frac{D_1\times \ldots\times D_n}{G'}. 
\]
The map $f$ lifts to an automorphism 
$\hat{f} \in \Aut(\Delta^n) = \Aut(\Delta)\wr \mathfrak S_n$. Up to permutation of the curves $D_i$, we may assume that 
$$\hat{f}(z_1,\ldots,z_n)=\big(\hat{f_1}(z_1), \ldots,\hat{f_n}(z_n)\big).$$
Conjugation with $\hat{f}$ induces an isomorphism between 
the deck transformation  groups of the universal covers
\[
f_{\ast} \colon \Gamma \to \Gamma', 
\qquad (\gamma_1, \ldots, \gamma_n) \mapsto 
\big(\hat{f_1}\gamma_1\hat{f_1}^{-1}, \ldots, \hat{f_n}\gamma_n\hat{f_n}^{-1}\big).
\]
Our goal is to  show that $f_{\ast}$ restricts to an isomorphism between the fundamental groups of the product of curves 
\begin{equation}\label{FundProducts}
f_{\ast}(\Gamma_1\times \ldots \times \Gamma_n)= \Gamma_1'\times \ldots \times \Gamma_n'.
\end{equation}
Under this condition the uniqueness follows, since then  $f_{\ast}$ induces an isomorphism of Galois groups 
$$G\simeq \Gamma/(\Gamma_1\times \ldots \times \Gamma_n)
\to \Gamma'/(\Gamma_1'\times \ldots \times \Gamma_n') \simeq G'$$
and  a biholomorphic  lift of $f$ to the 
 product of curves:
	 \[
	\xymatrix{
	(C_1\times \ldots\times C_n)/G  \ar[r]^{f} & 
 (D_1 \times \ldots \times D_n)/G'  \\
	C_1\times \ldots \times C_n \ar[u] \ar[r]^{\tilde{f}} & D_1\times \ldots \times D_n. \ar[u]}
	\]
To verify equation \ref{FundProducts}, it suffices to show that 
\[
f_{\ast}(1,\ldots,1, \gamma_i ,1,\ldots,1) \in \Gamma_1'\times \ldots \times \Gamma_n', 
\]
for all $1\leq i \leq n$ and $\gamma_i\in \Gamma_i$. Here we use the structure of $\Gamma'$ given in Remark \ref{structureFund} and assume for simplicity  that $i=1$. 
Take $\gamma_1 \in \Gamma_1$, then 
\[
f_{\ast}(\gamma_1,1,\ldots,1)=
\big(\hat{f_1}\gamma_1\hat{f_1}^{-1},1, \ldots, 1\big)
\in \Gamma'. 
\]
Hence there exists $g\in G'$  such that 
\[
\overline{g}=\rho_1'\big(\hat{f_1}\gamma_1\hat{f_1}^{-1}\big)\in G'/K_1 \qquad \makebox{and} \qquad  \overline{g}=\rho_j'(1)=1 \in G'/K_j'
\]
for all $j\geq 2$. 
This implies 
$g\in K_2'\cap \ldots \cap K_n' =\lbrace 1_{G'}\rbrace$
and therefore also $\hat{f_1}\gamma_1\hat{f_1}^{-1} \in \ker(\rho_1')=\Gamma_1'$. 
\end{proof}

\begin{remark}
 Theorem \ref{minimalreal} allows us to attach, to any   variety  isogenous to a higher product of unmixed type, 
a product of curves 
$C_1\times \ldots \times C_n$ together with a finite group $G$ acting minimally on the product. We will use this fact in the next section to give a purely group theoretical description of Beauville manifolds. 
\end{remark}

\begin{Proposition}
A diagonal and free $G$-action  on a 
product of compact Riemann surfaces 
\[
Y=C_1 \times \ldots \times C_n \qquad \makebox{with} \qquad g(C_i) \geq 2
\]
yields a Beauville manifold, if and only if each $C_i$ is a \emph{triangle curve}: 
\begin{enumerate}
\item 
$C_i/\overline{G}_i \simeq \mathbb P^1$ and 
\item 
$C_i \to C_i/\overline{G}_i$ is branched in three points, where $\overline{G}_i=G/K_i$. 
\end{enumerate}
\end{Proposition}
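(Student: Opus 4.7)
The plan is to translate the rigidity of $X=Y/G$ into a cohomological statement about each factor and then interpret that statement as a condition on the quotient map $C_i\to C_i/\overline{G}_i$.

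First I would invoke the standard fact that a compact complex manifold is rigid if and only if its space of first-order deformations $H^1(X,\Theta_X)$ vanishes. Since the $G$-action on $Y$ is free, the cover $Y\to X$ is \'etale, so $\Theta_Y=p^*\Theta_X$ and
\[
H^1(X,\Theta_X)\cong H^1(Y,\Theta_Y)^G.
\]
Using $\Theta_Y=\bigoplus_i \mathrm{pr}_i^*\Theta_{C_i}$, K\"unneth, and the vanishing $H^0(C_i,\Theta_{C_i})=0$ coming from $g(C_i)\geq 2$, the right hand side collapses to
\[
H^1(X,\Theta_X)\cong \bigoplus_{i=1}^n H^1(C_i,\Theta_{C_i})^G=\bigoplus_{i=1}^n H^1(C_i,\Theta_{C_i})^{\overline{G}_i},
\]
where the last equality holds because $K_i$ acts trivially on $C_i$ and therefore on $H^1(C_i,\Theta_{C_i})$. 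Hence $X$ is rigid if and only if every summand vanishes.

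Next I would compute the dimension of each invariant piece. By Kodaira--Spencer, $H^1(C_i,\Theta_{C_i})^{\overline{G}_i}$ is canonically the tangent space at $[C_i]$ to the moduli space of curves endowed with a faithful $\overline{G}_i$-action of the same topological type as the given one. Equivalently, via the quotient map $C_i\to C_i/\overline{G}_i$, it is the tangent space to the moduli of $r_i$-pointed curves of genus $g_i':=g(C_i/\overline{G}_i)$, where $r_i$ is the number of branch points. This standard computation (which can also be carried out directly using the Eichler trace formula applied to $\Theta_{C_i}\cong K_{C_i}^{-1}$) yields
\[
\dim H^1(C_i,\Theta_{C_i})^{\overline{G}_i}\;=\;3g_i'-3+r_i.
\]
Consequently $X$ is rigid if and only if $3g_i'+r_i=3$ for every $i$.

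Finally I would rule out the degenerate cases. The option $g_i'=1,\;r_i=0$ would correspond to an unramified cover $C_i\to C_i/\overline{G}_i$ of an elliptic curve, forcing $g(C_i)=1$ by Riemann--Hurwitz, contradicting the hypothesis $g(C_i)\geq 2$. Every other nonnegative solution is excluded by positivity, leaving only $g_i'=0,\;r_i=3$, i.e.\ each $C_i$ is a triangle curve. The main obstacle is precisely the dimension formula $\dim H^1(C_i,\Theta_{C_i})^{\overline{G}_i}=3g_i'-3+r_i$; everything else is formal. Once this is in hand, the equivalence follows by combining the steps above.
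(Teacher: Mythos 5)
Your argument is correct and follows essentially the same route as the paper: reduce rigidity to $H^1(Y,\Theta_Y)^G=0$, decompose via K\"unneth using $H^0(C_i,\Theta_{C_i})=0$, and observe that the diagonal action makes the invariants split factor by factor into $H^1(C_i,\Theta_{C_i})^{\overline{G}_i}$. The only difference is that where the paper cites \cite[Examples VI.12 (2)]{B83} for the equivalence of $H^1(C_i,\Theta_{C_i})^{\overline{G}_i}=0$ with the triangle-curve conditions, you carry out the dimension count $3g_i'-3+r_i$ and the elimination of the case $g_i'=1$, $r_i=0$ explicitly, which is exactly the content of that reference.
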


\begin{proof}
    Since $H^0(C_i,\Theta_{C_i})=0$, the K\"unneth formula yields
    \[
    H^1(Y,\Theta_{Y})=H^1(C_1,\Theta_{C_1}) \oplus \ldots \oplus H^1(C_n,\Theta_{C_n}).  \]
    Since the $G$-action is assumed to be diagonal, we have that 
    \[
    H^1(Y,\Theta_{Y})^G=0 \qquad \makebox{if and only if} \qquad H^1(C_i,\Theta_{C_i})^{\overline{G}_i}=0.
    \]
    By  using  \cite[Examples VI.12 (2)]{B83}, the condition 
    $H^1(C_i,\Theta_{C_i})^{\overline{G}_i}=0$ is easily seen to be equivalent to $(1)$ and $(2)$.  
\end{proof}

\begin{Corollary}
A Beauville manifold is always regular, i.e. it has no  non-zero global holomorphic $1$-forms. 
\end{Corollary}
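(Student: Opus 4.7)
The plan is to identify $H^0(X,\Omega_X^1)$ with $G$-invariants on the product $Y := C_1 \times \ldots \times C_n$ via the étale quotient map $\pi \colon Y \to X$, and then exploit the diagonal action together with the fact that the curves $C_i$ are triangle curves to force every invariant $1$-form to vanish.

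First, since $\pi$ is an unramified finite Galois cover with group $G$, pullback induces the isomorphism
\[
\pi^\ast \colon H^0(X,\Omega_X^1) \xrightarrow{\ \sim\ } H^0(Y,\Omega_Y^1)^G.
\]
Next, the Künneth formula together with $H^0(C_i,\mathcal O_{C_i}) = \mathbb C$ yields
\[
H^0(Y,\Omega_Y^1) \;=\; \bigoplus_{i=1}^n p_i^\ast\, H^0(C_i, \Omega_{C_i}^1),
\]
where $p_i \colon Y \to C_i$ is the $i$-th projection. Because the $G$-action is diagonal, each summand is $G$-stable, so
\[
H^0(Y,\Omega_Y^1)^G \;=\; \bigoplus_{i=1}^n H^0(C_i,\Omega_{C_i}^1)^{G}.
\]

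The $G$-action on $H^0(C_i,\Omega_{C_i}^1)$ is induced by $\psi_i\colon G \to \Aut(C_i)$ and hence factors through $\overline{G}_i = G/K_i$. Therefore
\[
H^0(C_i,\Omega_{C_i}^1)^G \;=\; H^0(C_i,\Omega_{C_i}^1)^{\overline{G}_i}.
\]
The standard identification of invariant differentials on a curve with differentials on the quotient gives
\[
H^0(C_i,\Omega_{C_i}^1)^{\overline{G}_i} \;\simeq\; H^0(C_i/\overline{G}_i,\Omega^1).
\]
By the preceding proposition, $C_i/\overline{G}_i \simeq \mathbb P^1$, and since $H^0(\mathbb P^1, \Omega^1) = 0$, each summand is zero. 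Combining everything gives $H^0(X,\Omega_X^1) = 0$, as required.

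I do not foresee any real obstacle: all three ingredients (étale pullback, Künneth, invariants under a finite group acting on a curve) are standard, and the heavy lifting, namely the triangle-curve characterization of the factors, is supplied by the previous proposition. The only point that deserves a brief word of justification is the identification of $\overline{G}_i$-invariant $1$-forms on $C_i$ with $1$-forms on the quotient $C_i/\overline{G}_i$, which holds in general for holomorphic $1$-forms because the ramification points of $C_i \to C_i/\overline{G}_i$ form a finite set and Hartogs' theorem allows extension.
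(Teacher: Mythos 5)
Your argument is correct and is essentially the paper's proof with the details filled in: the paper simply invokes the formula $q(X)=\sum_{i=1}^n g(C_i/\overline{G}_i)$ and concludes from $C_i/\overline{G}_i\simeq\mathbb P^1$, while you derive that formula via pullback to $G$-invariants, K\"unneth, and descent of invariant $1$-forms to the quotient curves. The only cosmetic quibble is the appeal to ``Hartogs'' for the one-variable extension step, where the classical identification $H^0(C_i,\Omega^1_{C_i})^{\overline{G}_i}\simeq H^0(C_i/\overline{G}_i,\Omega^1)$ (e.g.\ by a local computation at branch points or the Chevalley--Weil formula) is the cleaner reference.
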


\begin{proof} 
The irregularity of $X=(C_1\times \ldots \times C_n)/G$ is  given by 
$q(X)=\sum_{i=1}^n g(C_i/\overline{G}_i)=0$.
\end{proof}

\section{Group theoretical description of  Beauville $n$-folds}\label{grouptheo}
In this section we briefly recall  the theory of \emph{triangle curves} from the group theoretical point of view. 
This allows us to give a group theoretical description of unmixed Beauville $n$-folds that provides a way to classify them up to biholomorphism. \\
As we mentioned in the previous section, triangle curves are finite Galois covers
of the projective line branched on three points $\mathcal B:=\lbrace -1,0,1 \rbrace \subset \mathbb P^1$.
Note that the fundamental group of the complement $\mathbb{P}^1 \setminus \mathcal{B}$ is generated by three simple loops $\gamma, \delta$ and $\epsilon$ around  the points  $-1, 0$ and $1$, respectively. These loops satisfy a single relation and we get: 
\[
\pi_1(\mathbb{P}^1 \setminus \mathcal{B}, \infty) = \langle \gamma, \delta, \epsilon \mid \gamma \cdot \delta \cdot \epsilon = 1 \rangle.
\vspace{-1cm}
\]

\begin{center}
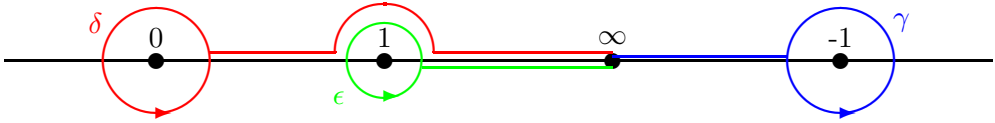
\begin{figure}[h]
\setlength{\unitlength}{1cm}
\thicklines
\begin{picture}(15,4)
    \put(1,2){\line(1,0){13}}
    \put(3,2){{\circle*{0.2}}}
    \put(6,2){{\circle*{0.2}}}
    \put(9,2){{\circle*{0.2}}}
    \put(12,2){{\circle*{0.2}}}
    \put(3,2){{\color{red}\circle{1.4}}}
    \put(12,2){{\color{blue}\circle{1.4}}}
    \put(6,2){{\color{green}\circle{1}}}
    \put(6,2.1){{\color{red}\oval(1.3,1.3)[t]}}
    \put(3.7,2.1){\color{red}\line(1,0){1.65}}
    \put(6.65,2.1){\color{red}\line(1,0){2.35}}
    \put(6.5,1.9){\color{green}\line(1,0){2.5}}
    \put(9,2.05){\color{blue}\line(1,0){2.3}}
    \put(3,2.3){\makebox(0,0){0}}
    \put(6,2.3){\makebox(0,0){1}}
    \put(9,2.3){\makebox(0,0){$\infty$}}
    \put(12,2.3){\makebox(0,0){-1}}
    \put(12.8,2.5){\color{blue}\makebox(0,0){$\gamma$}}
    \put(2.2,2.5){\color{red}\makebox(0,0){$\delta$}}
    \put(5.4,1.5){\color{green}\makebox(0,0){$\epsilon$}}
    \put(3.2,1.3){\color{red}\vector(1,0){0}}
    \put(6.2,1.53){\color{green}\vector(1,0){0}}
    \put(12.2,1.3){\color{blue}\vector(1,0){0}}
\end{picture}
\vspace{-1.5cm}

\caption{The three generators $\gamma$, $\delta$, $\epsilon$ of the fundamental group of $\mathbb{P}^1 \setminus \mathcal{B}$.}
\end{figure}
\end{center}
 \vspace{-1cm}
 
\begin{definition}
Let $G$ be a finite group. A triple $S=[a,b,c]$ of non-trivial group elements is called a {\em spherical triple of generators} or shortly a  \emph{generating triple} of $G$ if
\[
G= \langle a,b,c \rangle \qquad \makebox{and} \qquad  a\cdot b \cdot c=1_G
\]
The \emph{type of $S$} is  defined as $$T(S):=[\ord(a),\ord(b),\ord(c)].$$
\end{definition}
Observe that a  generating triple $S=[a,b,c]$ of a finite group $G$ induces a surjective homomorphism
\[
\eta_S \colon \pi_1(\mathbb P^1 \setminus \mathcal B, \infty ) \to G \qquad \makebox{by} \qquad  
    \gamma \mapsto a, \quad \delta \mapsto b, \quad \text{and} \quad \epsilon \mapsto c. 
    \]
Using \emph{Riemann's existence theorem}, the homomorphism $\eta_S$ yields a Galois triangle cover $$f_S \colon (C_S,q_0) \to (\mathbb P^1,\infty)$$ with branch locus $\mathcal B$ together with a unique isomorphism $\psi \colon G \to \Deck(f_S)$ such that the composition 
\[
(\psi \circ \eta_S) \colon \pi_1(\mathbb P^1 \setminus \mathcal B, \infty ) \to G \to  \Deck(f_S)
\]
is the monodromy map of the associated unramified cover. All triangle covers arise in this way.

\begin{rem}\label{triangle}\
\begin{enumerate}
    \item
The genus of $C_S$, the order of $G$ and the  orders of the generators $a$, $b$ and $c$ are related by  \emph{Hurwitz's formula}: 
$$
2g(C_S)-2=|G|\left(1- \frac{1}{\ord(a)}-  \frac{1}{\ord(b)}- \frac{1}{\ord(c)} \right).
$$
In particular we observe that $g(C_S) \geq 2$, if and only if 
\[
\frac{1}{\ord(a)}+  \frac{1}{\ord(b)}+ \frac{1}{\ord(c)}<1.
\]
In this case the generating triple $S=[a,b,c]$ is said to be  \emph{hyperbolic}, which we will assume throughout this section. 
\item 
The stabilizer set of $S$ is defined as 
\[
\Sigma_{S}:=\bigcup_{g\in G} (g\langle a \rangle g^{-1} \cup g\langle b \rangle g^{-1} \cup g\langle c \rangle g^{-1} )
\]
It consists of the elements in 
$G$ which have  at least one fixed point on the curve $C_S$. 
\end{enumerate}
\end{rem}

\begin{rem}\label{ReCond}  
An unmixed  Beauville $n$-fold $X=(C_1\times\ldots\times C_n)/G$ of general type yields an $n$-tuple $[S_1,\ldots,S_n]$, such that 
\begin{enumerate}
\item 
$S_i=[a_i,b_i,c_i]$ is a hyperbolic generating triple of $\overline{G}_i=G/K_i$, where $K_i \trianglelefteq G$ is the kernel of $\psi_i$.
\item
The intersection 
$K_1 \cap \ldots \cap \widehat{K}_i \cap \ldots \cap K_n$ is trivial  for all $i$. 

\medskip
\item 
$\bigcap_{i=1}^n \Sigma_{S_i} \cdot K_i =\lbrace 1_G\rbrace $. 
\end{enumerate}

The  condition that $S_i$ is hyperbolic  tells us $g(C_{S_i})\geq 2$. 
The second condition reflects the \emph{minimality of the realization} and the third the \emph{freeness of the action}.  
Conversely, any such tuple gives rise to an unmixed  Beauville $n$-fold of general type.
\end{rem}

\begin{definition}
Let  $G$ be a finite group. An $n$-tuple $[S_1, \ldots, S_n]$ of hyperbolic generating triples such that the conditions (1), (2) and (3) from Remark \ref{ReCond} hold is called an \emph{$n$-fold unmixed Beauville structure} for $G$. The set of $n$-fold unmixed Beauville structures for $G$ is denoted by $\mathcal{UB}_n(G)$.  

The group $G$ is called an 
\emph{$n$-fold unmixed Beauville group} if $G$ admits an \emph{$n$-fold unmixed Beauville structure}. 
\end{definition}

In the following  we derive a 
criterion   which allows us to decide  whether  two  unmixed Beauville structures  yield biholomorphic Beauville manifolds  (cf. \cite[Proposition 4.2]{IFG} for the surface case).
For this purpose it is important to understand when two generating triples lead to the same triangle cover.

\begin{definition}\label{twistedCoverIso}
A \emph{twisted covering isomorphism} of two triangle $G$-covers 
$f_i\colon C_i \to \mathbb P^1$, 
branched on 
$\mathcal B=\lbrace -1,0,1 \rbrace$, is a pair  $(u,v)$ of biholomorphic maps
$$
u \colon C_1 \to C_2 \qquad \makebox{and} \qquad   v \colon \mathbb P^1 \to \mathbb P^1 
$$
such that $v(\mathcal B) = \mathcal B$ and $u \circ  f_1 = f_2 \circ v$. 
\end{definition}

\begin{rem}\label{twistII}
Let $\psi_i \colon G \to \Deck(f_i)$ be the corresponding $G$-actions, then the existence of a twisted covering isomorphism is equivalent to 
the existence of an automorphism $\alpha \in \Aut(G)$ and a biholomorphism $u\colon C_1 \to C_2$ such that 
\[
\psi_2(\alpha(g))\circ u = u \circ \psi_1(g) \quad \makebox{for all} \quad g \in G.
\]
As we shall see, this holds if and only if the corresponding generating triples belong to the same orbit of a certain group action on the set $\mathcal S(G)$ of all hyperbolic generating triples of $G$.
First of all, there  is a natural action of the  \emph{Artin-Braid} group  
$$
\mathcal B_3:=\langle \sigma_1,\sigma_2 ~ \vert ~ \sigma_1\sigma_2\sigma_1 =\sigma_2 \sigma_1 \sigma_2 \rangle 
$$
on $\mathcal S(G)$ defined by: 
\[
\sigma_1 \ast [a,b,c]:= [aba^{-1},a,c] \qquad \makebox{and} \qquad  \sigma_2 \ast [a,b,c]:= [a,bcb^{-1},b].
\]
This action commutes with the diagonal action of an automorphism  $\alpha \in \Aut(G)$ given by
$$\alpha \ast [a,b,c]:=[\alpha(a) , \alpha(b) ,\alpha(c) ].$$
Thus, we obtain a well-defined action of  $\Aut(G) \times \mathcal B_3$  on  $\mathcal S(G)$ by
\[
(\alpha,\delta)\ast S:=\alpha \ast (\delta \ast S).
\] 
\end{rem}

\begin{Proposition} \cite[Proposition 2.3]{IFG}\label{CoveringIsomorphisms}
Let $G$ be a finite group and $S,S' \in \mathcal S(G)$ be two generating triples of $G$. Then, the following are equivalent:
\begin{enumerate}
\item
There is a twisted covering isomorphism between $f_S\colon C_S \to \mathbb P^1$ and $f_{S'}\colon C_{S'} \to \mathbb P^1$.
\item
The generating triples $S$ and $S'$ are in the same $\Aut(G) \times \mathcal B_3$ orbit.
\end{enumerate} 
\end{Proposition}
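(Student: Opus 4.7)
The plan is to exploit the monodromy dictionary: a triangle $G$-cover $f_S \colon C_S \to \mathbb{P}^1$ branched over $\mathcal{B}$ is equivalent data to the surjection $\eta_S \colon \pi_1(\mathbb{P}^1 \setminus \mathcal{B}, \infty) \to G$ introduced before the statement, which in turn is encoded by the triple $S = [\eta_S(\gamma), \eta_S(\delta), \eta_S(\epsilon)]$. Under this dictionary the proposition becomes a comparison between geometric isomorphisms of covers (on the left) and algebraic moves on surjections (on the right), so the proof consists in verifying that the group generated by the moves on the right matches exactly the group of relations on the left.

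For $(2) \Rightarrow (1)$, I would check the implication separately on the two types of generators of $\Aut(G) \times \mathcal{B}_3$. If $S' = \alpha \ast S$ with $\alpha \in \Aut(G)$, then $\eta_{S'} = \alpha \circ \eta_S$, so $\ker(\eta_S) = \ker(\eta_{S'})$ and the ramified covers $f_S$ and $f_{S'}$ coincide; the pair $(u,v) = (\id_{C_S}, \id_{\mathbb{P}^1})$ realises a twisted covering isomorphism and $\alpha$ plays the role of the intertwiner in Remark~\ref{twistII}. For the braid generators I would exhibit explicit Möbius transformations $v_i \in \Aut(\mathbb{P}^1)$ with $v_i(\mathcal{B}) = \mathcal{B}$ that implement the two half-twists: $v_1$ swaps $-1$ and $0$ while fixing $1$, and $v_2$ swaps $0$ and $1$ while fixing $-1$. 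After choosing a fixed path from $v_i(\infty)$ to $\infty$, the induced outer automorphism of $\pi_1(\mathbb{P}^1\setminus \mathcal{B}, \infty)$ matches the combinatorial action $\sigma_i \ast (\cdot)$ on generators. Pulling back $f_S$ along $v_i$ therefore yields a triangle $G$-cover isomorphic to $f_{\sigma_i \ast S}$, and this isomorphism combined with $v_i$ is the desired twisted covering isomorphism.

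For $(1) \Rightarrow (2)$, suppose $(u,v)$ is a twisted covering isomorphism. The Möbius map $v$ permutes $\mathcal{B}$ via some $\pi \in \mathfrak{S}_3$ and, after fixing a path from $v(\infty)$ to $\infty$, induces an automorphism $v_{\ast}$ of $\pi_1(\mathbb{P}^1 \setminus \mathcal{B},\infty)$ that permutes the conjugacy classes of $\gamma,\delta,\epsilon$ according to $\pi$. The biholomorphism $u$ identifies $v^{\ast}f_{S'}$ with $f_S$ and supplies an automorphism $\alpha \in \Aut(G)$ of the deck group with $\eta_{S'} \circ v_{\ast} = \alpha \circ \eta_S$. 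At this point I would invoke the classical Hurwitz fact that every automorphism of $\pi_1(\mathbb{P}^1 \setminus \mathcal{B})$ preserving the cuspidal conjugacy classes lies, modulo inner automorphisms, in the image of the natural representation $\mathcal{B}_3 \to \Out(\pi_1(\mathbb{P}^1\setminus \mathcal{B}))$; the inner automorphism arising from any path ambiguity is conjugation by some $w \in \pi_1$, which under $\eta_S$ is absorbed into the $\Aut(G)$ factor as the inner automorphism by $\eta_S(w)$. Translating this equality of surjections back to triples gives $S' = (\alpha, \delta) \ast S$ for some $\delta \in \mathcal{B}_3$.

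The delicate step is the last one: identifying the image of $\mathcal{B}_3$ inside the outer automorphism group of $\pi_1(\mathbb{P}^1\setminus \mathcal{B})$ that permutes the three cuspidal conjugacy classes, and carrying out the base-point bookkeeping so that the path-dependent $v_{\ast}$ becomes a well-defined braid acting on triples. One must also check that the inner automorphisms introduced by the choice of path are exactly the ones absorbed by the diagonal $\Aut(G)$-action, so that the two actions indeed fit together into a single $\Aut(G) \times \mathcal{B}_3$-orbit and nothing is double-counted.
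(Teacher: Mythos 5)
The paper does not actually prove this proposition: it is quoted verbatim from Bauer--Catanese--Grunewald \cite[Prop.~2.3]{IFG}, so there is no in-paper argument to compare yours against. Judged on its own merits, your proposal follows the standard Riemann-existence/Hurwitz-move argument and is essentially correct: the reduction of (2)$\Rightarrow$(1) to the generators of $\Aut(G)\times\mathcal B_3$, the observation that an automorphism of $G$ leaves $\ker\eta_S$ (hence the cover) unchanged and only twists the deck-group identification, and the use of the Dehn--Nielsen--Baer-type fact for (1)$\Rightarrow$(2) are all the right moves. (Minor point: the commutation relation in Definition~\ref{twistedCoverIso} should read $f_{S'}\circ u=v\circ f_S$; your diagrams implicitly use the correct version.)

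The one step you should make explicit, because it is where the argument could silently go wrong, is your claim that the M\"obius involution $v_i$ induces \emph{exactly} the half-twist $\sigma_i$ on $\pi_1(\mathbb P^1\setminus\mathcal B,\infty)$. A priori it only induces it up to a pure mapping class (and up to the inner automorphism coming from the choice of path from $v_i(\infty)$ to $\infty$), and the group of M\"obius transformations preserving $\mathcal B$ is the finite group $\mathfrak S_3$, whereas $\mathcal B_3$ is infinite --- so some collapsing must occur for the two orbit notions to coincide. The fact that reconciles this, and which closes the gap in both directions of your proof, is the elementary computation that the pure braid generators act by simultaneous conjugation: $\sigma_1^2\ast[a,b,c]=[\,c^{-1}ac,\;c^{-1}bc,\;c^{-1}cc\,]$ and $\sigma_2^2\ast[a,b,c]=[\,a\,a\,a^{-1},\;a b a^{-1},\;a c a^{-1}\,]$, i.e.\ $\sigma_i^2$ acts through an inner automorphism of $G$. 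Hence, modulo the diagonal $\Aut(G)$-action, the $\mathcal B_3$-action on $\mathcal S(G)$ factors through $\mathcal B_3\twoheadrightarrow\mathfrak S_3$ and is therefore fully realized by the six M\"obius transformations permuting $\{-1,0,1\}$; this is also exactly the statement that the inner automorphisms introduced by your path and basepoint ambiguities are absorbed into the $\Aut(G)$ factor, which you assert but do not verify. With that computation inserted, your ``delicate step'' becomes routine and the proof is complete.
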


\begin{rem}\label{ReCond2} 
    Theorem \ref{CoveringIsomorphisms} tells us that the collection of triangle $G$-covers modulo twisted covering isomorphisms is in  bijection with the quotient 
    \[
  \mathcal T(G):=   \mathcal{S}(G)/(\Aut(G)\times\mathcal{B}_3).
    \] 
    For this reason, several authors put effort into the development of an efficient algorithm to  compute these quotients, see \cite{CGP23} and \cite{Pa23}.
    In   \cite{CGP23} a database is set up, which contains a  representative for any orbit of hyperbolic generating triples for a fixed genus $g\leq 64$ and group order $d\leq 2000$. 
\end{rem}

\begin{Proposition}\label{biholliftkernel}
Let $X$ and $X'$ be Beauville $n$-folds given by the unmixed Beauville structures  $[S_1,\ldots,S_n]$ and $[S'_1,\ldots, S'_n]$, where $S_i \in S(G/K_i)$ and $S_i' \in S(G/K_i')$.  
Then $X$ and $X'$  
are biholomorphic if and only if
there exist an automorphism $\alpha\in\Aut(G)$, braids  $\delta_1,\ldots,\delta_n\in \mathcal B_3$ and a permutation $\tau\in \mathfrak S_n$, such that 
        \[
        K_i'=\alpha(K_{\tau(i)}) \qquad \makebox{and} \qquad 
            S'_i= (\overline{\alpha},\delta_i)\ast S_{\tau(i)}, \qquad \makebox{where} \qquad \overline{\alpha} \colon G/K_{\tau(i)} \to G/K_i'.
        \]
\end{Proposition}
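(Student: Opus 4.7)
The proof splits into two directions, the substantial one being \emph{only if}. The plan is to use the uniformization setup of Remark \ref{structureFund}: any biholomorphism $f\colon X\to X'$ lifts to an automorphism $\hat f$ of the polydisc $\Delta^n$, and the rigidity Lemma \ref{rigidity-lemma} forces
\[
\hat f(z_1,\ldots,z_n)=\bigl(\hat f_1(z_{\tau(1)}),\ldots,\hat f_n(z_{\tau(n)})\bigr)
\]
for some $\tau\in\mathfrak S_n$ and $\hat f_i\in\Aut(\Delta)$. Conjugation by $\hat f$ gives an isomorphism $f_\ast\colon \Gamma\to \Gamma'$ which, by the same computation as in step (II) of the proof of Theorem \ref{minimalreal}, carries the normal subgroup $\Gamma_1\times\ldots\times\Gamma_n$ isomorphically onto $\Gamma_1'\times\ldots\times\Gamma_n'$; more precisely, one verifies $\hat f_i\,\Gamma_{\tau(i)}\,\hat f_i^{-1}=\Gamma_i'$. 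This is the step where the minimality condition (2) of Remark \ref{ReCond} is essential, exactly as in the proof of Theorem \ref{minimalreal}.

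Passing to the quotient yields an automorphism $\alpha\colon G\to G$, and each $\hat f_i$ descends to a biholomorphism $u_i\colon C_{\tau(i)}\to C_i'$ satisfying $u_i\circ \psi_{\tau(i)}(g)=\psi_i'(\alpha(g))\circ u_i$ for every $g\in G$. Comparing kernels immediately forces $\alpha(K_{\tau(i)})=K_i'$, so $\alpha$ induces a well-defined isomorphism $\overline\alpha\colon G/K_{\tau(i)}\to G/K_i'$ and the pair $(u_i,\overline\alpha)$ is a twisted covering isomorphism between the triangle covers attached to $S_{\tau(i)}$ and $S_i'$ in the sense of Definition \ref{twistedCoverIso} and Remark \ref{twistII}. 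Applying Proposition \ref{CoveringIsomorphisms} then produces braids $\delta_i\in \mathcal B_3$ with $S_i'=(\overline\alpha,\delta_i)\ast S_{\tau(i)}$, which is the required relation.

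For the converse, the condition $\alpha(K_{\tau(i)})=K_i'$ makes $\overline\alpha$ well defined, and Proposition \ref{CoveringIsomorphisms} produces a twisted covering isomorphism $u_i\colon C_{\tau(i)}\to C_i'$ realizing the orbit element $(\overline\alpha,\delta_i)$. Combining the $u_i$ with the permutation of factors determined by $\tau$ yields a biholomorphism $\tilde f\colon C_1\times\ldots\times C_n\to C_1'\times\ldots\times C_n'$ that is equivariant with respect to $\alpha\in\Aut(G)$, and hence descends to a biholomorphism $X\to X'$.

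I expect the main obstacle to be the careful bookkeeping of the permutation $\tau$: keeping the index conventions consistent (whether $\hat f_i$ acts on the $\tau(i)$-th or the $\tau^{-1}(i)$-th factor, how $\alpha$ transports kernels through the permutation) and matching Proposition \ref{CoveringIsomorphisms} to our setting, where a priori we have only a $G$-equivariant biholomorphism of curves rather than a basepoint-preserving covering isomorphism between branched covers. The analytic content is minimal once the rigidity lemma and Proposition \ref{CoveringIsomorphisms} are available; the remaining work is a clean translation between biholomorphisms of the quotients and the group-theoretic data $(\alpha,\delta_1,\ldots,\delta_n,\tau)$.
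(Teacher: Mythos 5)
Your proposal is correct and follows essentially the same route as the paper: lift $f$ to the product of curves (the paper cites Theorem \ref{minimalreal} directly where you unpack its step (II) through the polydisc), use Lemma \ref{rigidity-lemma} to split $\hat f$ into factor maps plus a permutation, observe that descent forces the $\alpha$-equivariance and hence $\alpha(K_{\tau(i)})=K_i'$, and conclude via twisted covering isomorphisms and Proposition \ref{CoveringIsomorphisms}. The only cosmetic difference is that you make the well-definedness of $\overline\alpha$ via the kernel comparison explicit, which the paper leaves implicit.
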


\begin{proof}
 By uniqueness of the minimal realization (Theorem \ref{minimalreal}), every biholomorphism $f\colon X\to X'$ between the quotients lifts to a biholomorphism $$\hat{f}\colon C_{S_1}\times\ldots\times C_{S_n}\to C_{S'_1}\times\ldots\times C_{S'_n}.$$
As explained in Proposition \ref{rigidity-lemma}, this map must be of the form 
           \[
            \hat{f}(z_1,\ldots,z_n)=(u_1(z_{\tau(1)}),\ldots,u_n(z_{\tau(n)})),
        \]
for some permutation  $\tau\in\mathfrak S_n$. 
Such a map descends to the quotient level if and only if there exists an automorphism $\alpha\in\Aut(G)$ such that 
\begin{equation}\label{twistlift}
\psi_i'(\alpha(g))\circ u_i = u_i \circ \psi_{\tau(i)}(g) \qquad \makebox{for all} \qquad g \in G.
\end{equation}
 Now the action 
$
\psi_{\tau(i)} \colon G \to \Aut(C_{S_{\tau_i}})$
factors through the faithful action  $$\overline{\psi}_{\tau(i)} \colon G/K_{\tau(i)} \to \Aut(C_{S_{\tau_i}})$$ and similarly $\psi_i'$ descends to 
$\overline{\psi'}_{i} \colon  G/K_i' \to \Aut(C_{S_i'})$. Thus the Equation \ref{twistlift} from above is equivalent to 
\[
\overline{\psi_i'}(\overline{\alpha}(\overline{g}))\circ u_i = u_i \circ \overline{\psi}_{\tau(i)}(\overline{g}), \qquad \makebox{where} \qquad \overline{g} \in G/K_{\tau(i)}.  
\]
Here, $
\overline{\alpha} \colon G/K_{\tau(i)} \to G/K_{i}'$ denotes the induced isomorphism. 
In other words, we have twisted covering isomorphisms 
\[
    \begin{xy}
    \xymatrix{
    C_{S_{\tau_i}} \ar[r]^{u_i} \ar[d] & C_{S'_i} \ar[d] \\
   \displaystyle{ \frac{C_{S_{\tau_i}}}{G/K_{\tau(i)}} } \ar[r]_{v_i} & \displaystyle{ \frac{C_{S'_i}}{G/K_{i}'}}    }
    \end{xy}
\]
The claim follows now from  Proposition \ref{CoveringIsomorphisms}, that we use to translate into the language of spherical generating triples. 
The converse direction is clear. 
\end{proof}

\begin{remark}\label{GeneralQuotient}
The group $\Aut(G)\times (\mathcal B_3 \wr \mathfrak S_n)$ acts on $\mathcal{UB}_n(G)$ by 
\[
\big(\alpha, \delta_1, \ldots, \delta_n, \tau \big)\ast (S_1, \ldots,S_n):= \big( (\overline{\alpha},\delta_1)\ast S_{\tau^{-1}(1)}, \ldots, (\overline{\alpha},\delta_n)\ast S_{\tau^{-1}(n)} \big), 
\]
where $\overline{\alpha} \colon G/K_{i} \to G/\alpha(K_{i})$ are the induced isomorphisms. 
According to Proposition \ref{biholliftkernel}, the orbits of this action are in one-to-one correspondence with the biholomorphism classes of Beauville $n$-folds with group $G$. 
\end{remark}
To avoid a complicated notation, we will from now on 
denote $\overline{\alpha}$  by $\alpha$.

\section{The orbits of $\mathcal {UB}_n(G)$ modulo $\Aut(G)\times \mathcal (B_3 \wr \mathfrak S_n)$}\label{sec: computation_o_the_fibre}

\noindent 
 In this section, we will provide a detailed explanation on how to count the number of elements in the quotient
\[
Beau_n(G) := \frac{\mathcal {UB}_n(G)}{\Aut(G)\times (\mathcal B_3 \wr \mathfrak S_n)} \quad \makebox{by utilizing} \quad 
\mathcal T(G/K_i)=\frac{\mathcal S(G/K_i)}{\Aut(G/K_i) \times \mathcal B_3}.\]
As discussed in Remark \ref{ReCond2}, the latter may be determined by using the database from \cite{CGP23}. 
We break down the problem into several steps. First we exploit the natural action of $\Aut(G)$ on the set of potential kernels 
\[
\lbrace (K_1, \ldots, K_n) ~ \big\vert ~ K_1 \cap \ldots \cap \widehat{K}_i \cap \ldots \cap K_n=\lbrace 1_G\rbrace, ~~  K_i \trianglelefteq G  \rbrace,  
\]
which is defined by 
$$\alpha \ast (K_1, \ldots, K_n):= \left(\alpha(K_1), \ldots, \alpha(K_n)\right).$$ 
For each orbit, let  $
\mathcal K= (K_1, \ldots, K_n)$ be a representative. We use the permutations in $\mathfrak S_n$ to put equal kernels side by side. This allows us to assume and write
$\mathcal K= (K_1, \ldots, K_n)=(N_1^{n_1}, \ldots, N_k^{n_k})$
 with pairwise distinct $N_i$.
 Let $\Aut_{\mathcal K}(G)$ be the stabilizer of $\mathcal K$, then we can form the quotients 
\begin{equation}\label{quotientsK}
Beau_n(G, \mathcal K):= \dfrac{\prod_{i=1}^k\mathcal S(G/N_i)^{n_i}}{\Aut_{\mathcal K}(G)\times \prod_{i=1}^k (\mathcal B_3 \wr \mathfrak S_{n_i})} 
\end{equation}
 Running over all $\mathcal K$, 
 we obtain the elements of  $Beau_n(G)$,
 by selecting the classes in 
 $Beau_n(G, \mathcal K)$ such that the condition on the intersection of the stabilizer sets 
from Remark \ref{ReCond} (3) holds. 
 Thus it suffices to achieve a description of   $Beau_n(G, \mathcal K)$ in terms of $\mathcal T(G/N_i)$. 
 
 \begin{remark}\
 \begin{enumerate}
 \item 
  Since any automorphism  in $\Aut_{\mathcal K}(G)$ induces an automorphism in  
 $\Aut\left( G/N_i \right)$,
there is a natural surjective map 
 \[
 \eta \colon  Beau_n(G, \mathcal K) \to 
\frac{\prod_{i=1}^k \mathcal T(G/N_i)^{n_i}}{\prod_{i=1}^k \mathfrak S_{n_i}}
 \]
 Following \cite{fede24}, we solve the problem of counting \ref{quotientsK} by counting the elements in the 
fibres of $\eta$.
\item Dropping  the action of the symmetric group, we obtain another surjection 
\[
\pi \colon \frac{\prod_{i=1}^n S(G/K_i)}{\Aut_{\mathcal K}(G)\times \mathcal B_3^n} \to \prod_{i=1}^n  \mathcal T(G/K_i).
\]
\item 
For a point 
$x=\big([S_1], \ldots, [S_n]\big) \in \prod_{i=1}^n  \mathcal T(G/K_i)$
the fibers $\eta^{-1}\big([x]\big)$ and $\pi^{-1}(x)$ are related modulo  the action of the 
stabilizer of $x$. 
More precisely 
\[
\Stab(x) \leq \prod_{i=1}^k \mathfrak S_{n_i} \qquad \makebox{acts on} \qquad   \pi^{-1}(x)
\]
and we obtain a bijection $\eta^{-1}\big([x]\big) 
\simeq \pi^{-1}(x)/\Stab(x)$.
\end{enumerate}
\end{remark}
Thus we can break down the enumeration problem of the fibres of $\eta$ in   two steps:
\begin{enumerate}
    \item Describe the fibres $\pi^{-1}(x)$.
    \item Count the orbits of the $\Stab(x)$-action on $\pi^{-1}(x)$. 
\end{enumerate}
We start with the first step. By construction of $\pi$, the  fibre 
 is given by 
\[
\pi^{-1}(x)= \lbrace [\alpha_1 \ast  S_1, \ldots, \alpha_n \ast  S_n] ~ \big\vert ~ \alpha_i \in  \Aut(G/K_i) \rbrace. 
\]
The problem is that  different tuples of automorphisms 
\[
(\alpha_1, \ldots, \alpha_n), ~ (\beta_1, \ldots, \beta_n) \in \prod_{i=1}^n \Aut(G/K_i)
\]
may lead to same point in the fibre $\pi^{-1}(x)$. 
To deal with this ambiguity, we use the concept of 
automorphisms of \textit{braid type},  introduced in \cite{fede24}:  
\begin{definition}
	Let $S$ be a generating triple of  the finite group $G$. Then the group of automorphisms of \textit{braid type} on $S$  is defined as
	\[
	\mathcal B\Aut(G, S):=\lbrace \varphi \in \Aut(G)\colon \exists \  \sigma \in \mathcal{B}_3\ \textrm{such that }\varphi \ast S=\sigma\ast S  \rbrace.
	\]
\end{definition}
\begin{rem}
 Since the action of an automorphism of $G$ commutes with the action of a braid,  it follows that $\mathcal B\Aut(G, S)$ is a subgroup of $\Aut(G)$.
\end{rem}

\begin{Proposition}\label{BraidTypeIso}
Two  tuples of automorphisms 
\[
(\alpha_1, \ldots, \alpha_n), ~ (\beta_1, \ldots, \beta_n) \in \prod_{i=1}^n \Aut(G/K_i)
\]
yield the same point in the fibre $\pi^{-1}(x)$ if and only if there exists 
$\alpha \in \Aut_{\mathcal K}(G)$ and $\gamma_i \in \mathcal B\Aut(G/K_i, S_i)$ such that 
\[
 \beta_i=\alpha \circ \alpha_i \circ \gamma_i  \qquad \makebox{for all} \qquad i=1, \ldots, n.  
\]
\end{Proposition}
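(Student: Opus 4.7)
The plan is a direct manipulation of the orbit condition, relying on two facts: the $\Aut(G/K_i)$-action on $\mathcal S(G/K_i)$ commutes with the $\mathcal B_3$-action (Remark \ref{twistII}), and an element of $\mathcal B\Aut(G/K_i, S_i)$ is by definition one whose effect on $S_i$ can be realized by a braid. Throughout I will identify $\alpha \in \Aut_{\mathcal K}(G)$ with the automorphism it induces on each quotient $G/K_i$, as the convention stated just after Remark \ref{GeneralQuotient} permits, so that the compositions $\alpha \circ \alpha_i \circ \gamma_i$ make sense in $\Aut(G/K_i)$.

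For the direction $(\Leftarrow)$, I would assume $\beta_i = \alpha \circ \alpha_i \circ \gamma_i$ and pick braids $\sigma_i \in \mathcal B_3$ with $\gamma_i \ast S_i = \sigma_i \ast S_i$ (which exist by the definition of $\mathcal B\Aut(G/K_i, S_i)$). Using $(\phi \circ \psi) \ast S = \phi \ast (\psi \ast S)$ together with the commutativity of the two actions, one computes
\[
\beta_i \ast S_i \;=\; \alpha \ast \alpha_i \ast \gamma_i \ast S_i \;=\; \alpha \ast \alpha_i \ast \sigma_i \ast S_i \;=\; \alpha \ast \sigma_i \ast (\alpha_i \ast S_i).
\]
Hence the element $(\alpha, \sigma_1, \ldots, \sigma_n) \in \Aut_{\mathcal K}(G) \times \mathcal B_3^n$ transports $(\alpha_1 \ast S_1, \ldots, \alpha_n \ast S_n)$ to $(\beta_1 \ast S_1, \ldots, \beta_n \ast S_n)$, so the two tuples represent the same class in $\pi^{-1}(x)$.

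For the converse $(\Rightarrow)$, if the two tuples lie in one orbit then there exist $\alpha \in \Aut_{\mathcal K}(G)$ and $\sigma_i \in \mathcal B_3$ with $\beta_i \ast S_i = \alpha \ast \sigma_i \ast (\alpha_i \ast S_i)$. Commuting $\sigma_i$ past $\alpha_i$ and moving $\alpha \circ \alpha_i$ to the other side gives
\[
\bigl((\alpha \circ \alpha_i)^{-1} \circ \beta_i\bigr) \ast S_i \;=\; \sigma_i \ast S_i,
\]
so I define $\gamma_i := (\alpha \circ \alpha_i)^{-1} \circ \beta_i$ and conclude $\gamma_i \in \mathcal B\Aut(G/K_i, S_i)$ directly from the definition. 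The factorization $\beta_i = \alpha \circ \alpha_i \circ \gamma_i$ then holds by construction.

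The argument is essentially formal, and I do not anticipate a genuine obstacle. The only point that needs care is the bookkeeping of the identifications: $\alpha$ must be read simultaneously as an element of $\Aut_{\mathcal K}(G)$ (to act in the orbit) and as its induced automorphism on each $G/K_i$ (to enter the composition defining $\gamma_i$), and one must verify that the $n$ choices of $\alpha$ across the components are in fact realized by a single element, which is automatic since the orbit uses a diagonal $\Aut_{\mathcal K}(G)$-action.
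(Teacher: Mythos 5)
Your proposal is correct and follows essentially the same route as the paper's proof: both directions reduce to the identity $\beta_i \ast S_i = (\alpha\circ\alpha_i)\ast(\sigma_i \ast S_i)$ and the definition $\gamma_i = \alpha_i^{-1}\circ\alpha^{-1}\circ\beta_i$, with the commutativity of the $\Aut$- and $\mathcal B_3$-actions doing the work. The paper's argument is the same computation written slightly more tersely, so there is nothing to add.
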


\begin{proof}
    Assume that 
    $$ [\alpha_1 \ast  S_1, \ldots, \alpha_n \ast  S_n]=
      [\beta_1 \ast  S_1, \ldots, \beta_n \ast  S_n] \in \pi^{-1}(x).$$
    Then there exists an automorphism $\alpha \in \Aut_{\mathcal K}(G)$ and braids $\delta_i \in \mathcal B_3$ such that 
    \[
    \beta_i \ast S_i= (\alpha \circ \alpha_i)  \ast (\delta_i \ast S_i), \qquad \makebox{so that} \qquad     
    (\alpha_i^{-1}\circ \alpha^{-1}\circ \beta_i) \ast S_i= \delta_i \ast S_i. 
    \]
    This shows that $\gamma_i :=\alpha_i^{-1}\circ \alpha^{-1}\circ \beta_i$ belongs to 
    $\mathcal B\Aut(G/K_i, S_i)$. Assume conversely that $\alpha \in \Aut_{\mathcal K}(G)$ and $\gamma_i \in \mathcal B\Aut(G/K_i, S_i)$ such that $
 \beta_i=\alpha \circ \alpha_i \circ \gamma_i$. Let $\delta_i \in \mathcal B_3$ be a braid fulfilling  $\gamma_i \ast S_i= \delta_i \ast S_i$, then we have 
\[
    \beta_i \ast S_i= (\alpha \circ \alpha_i)  \ast (\delta_i \ast S_i). 
   \]
   This shows that the corresponding points in the fibre agree. 
\end{proof}

\begin{rem}\label{remAction}
We have obtained a generalization of \cite[Thm. 2.18 and Cor. 2.20]{fede24} in higher dimension $n> 2$. More precisely, the following hold:
\begin{enumerate}
    \item 
    The group 
    \[
    \Aut_{\mathcal K}(G) \times  \prod_{i=1}^n\mathcal B\Aut(G/K_i, S_i) \qquad \makebox{acts on} \qquad \prod_{i=1}^n \Aut(G/K_i)
    \]
    via the rule 
    \[
    (\alpha,\gamma_1, \ldots, \gamma_n) \ast (\alpha_1, \ldots, \alpha_n) :=(\alpha \circ \alpha_1 \circ \gamma_1^{-1}, \ldots,\alpha \circ \alpha_n \circ \gamma_n^{-1})
    \]
By Proposition \ref{BraidTypeIso}, the quotient which is denoted by $
Q\left(\prod_{i=1}^n \Aut(G/K_i)\right)_{S_1,\dots, S_n}$,
is in bijection with $\pi^{-1}(x)$ via
\[
\psi \colon Q\left(\prod_{i=1}^n \Aut(G/K_i)\right)_{S_1,\ldots, S_n} \to \pi^{-1}(x), \quad [(\alpha_1, \ldots, \alpha_n)]   \mapsto   [\alpha_1 \cdot  S_1, \ldots, \alpha_n \cdot  S_n]
\]
By definition the bijection $\psi$ depends on the choices of representatives $S_i$ of the classes $[S_i]$.  
\item 
To  achieve a description of the fibers of $\eta$, we need to understand the induced action of $\Stab(x)$ on the quotient defined  in (1). 
Let 
\[
x=(x_1, \ldots, x_k) \in \prod_{i=1}^k  \mathcal T(G/N_i)^{n_i},  
\]
then up to exchanging the order of the factors within the product $T(G/N_i)^{n_i}$,  we may assume that 
\[
x_i=\big([S_{i,1}]^{m_{i,1}}, \ldots , [S_{i,l_i}]^{m_{i,l_i}}\big) \in \mathcal T(G/N_i)^{n_i}
\]

\medskip
\noindent 
with  pairwise distinct classes 
\[
[S_{i,1}], \ldots, [S_{i,l_i}] \qquad \makebox{and} \qquad m_{i,1} + \ldots + m_{i,l_i}=n_i.
\] 
In this notation, the  stabilizer of $x$ is given by 
\[
\Stab(x)= \prod_{i=1}^k\bigg(\prod_{j=1}^{l_i} \mathfrak S_{m_{i,j}}\bigg) <  \prod_{i=1}^k 
\mathfrak S_{n_i}.
\]
Clearly, the natural action of $\tau\in \Stab(x)$ on $\pi^{-1}(x)$ induces an action on the quotient 
\[
    Q\left(\prod_{i=1}^k \Aut(G/N_i)^{n_i}\right)_{S_{1,1}^{m_{1,1}},\dots, S_{1,l_1}^{m_{1,l_1}},\dots, S_{k,1}^{m_{k,1}},\dots, S_{k,l_k}^{m_{k,l_k}}}
\] 
which  is compatible with $\psi$ and defined by the following rule:
\[
    \tau \ast [(\alpha_1, \ldots, \alpha_n)]:=[(\alpha_{\tau^{-1}(1)}, \ldots, \alpha_{\tau^{-1}(n)}]. 
\] 
This action is of course  only well defined if we choose the same representative for all of the  identical classes that occur in $x$. 
\end{enumerate}
\end{rem}

\begin{rem}\label{rem: wrote_a MAGMA_script}
Following the approach presented in this section,  we wrote a MAGMA script that takes as input a group $G$, a sequence of kernels $K=(K_1, \dots, K_n)$ of $G$ and a sequence of classes  
\[
x=([S_1], \dots, [S_n]) \in \prod_{i=1}^n  \mathcal T(G/K_i).
\]
 It returns the fibre $\eta^{-1}([x])$, namely all unmixed Beauville $n$-folds with group $G$ and kernels $K$  defined by $[S_1], \dots, [S_n]$.
\end{rem}

\bigskip

\section{The Beauville dimension of a group}\label{section: BeauvilleDim}

In this section we discuss the Beauville dimension of a finite group $G$, that has been introduced by Carta and Fairbairn in \cite{CF22}. In their article Beauville manifolds of unmixed type and their groups  are studied  from a combinatorial and group theoretic perspective under the assumption that the $G$-action is absolutely faithful. 
They define the Beauville dimension of a finite group  $G$ as   the minimal length of a sequence $[S_1,\ldots,S_n]$ of generating triples for $G$, such that 
\[
 \Sigma_{S_1} \cap \ldots \cap  \Sigma_{S_n} =\lbrace 1_G \rbrace. 
  \]
If no such sequence exists then the Beauville dimension of $G$ is set to be $1$, see  \cite[Definition 1.3]{CF22}. 
According to their definition, the groups of Beauville dimension two are precisely the classical Beauville surface groups.
 Using computers  
they determine all finite groups $G$ of order less than or equal to $1023$ with Beauville dimension $2$, $3$ and $4$. 

\begin{Remark}
In their definition Carta and Fairbairn do not assume that the generating triples are hyperbolic i.e.  they also allow elliptic curves and even projective lines in the product. Hence the Beauville manifolds 
corresponding to the groups in their tables with Beauville  dimension 3 and 4 might not be of general type. 
In this cases the rigidity of the corresponding quotient manifolds does not follow 
from the condition that the curves are triangle curves, because there are further rigidity conditions involved, see \cite[Corollary 1.4]{BGK25}. 
Since they claim the rigidity in \cite[Definition 4]{CF22} but do not verify these extra conditions,  we decided to investigate their groups in 
 detail using the computer algebra system MAGMA: 
\begin{enumerate}
\item None of the $27$ groups of  Beauville dimension 3  admit a rigid action on an elliptic curve or on $\mathbb P^1$, i.e. all of them correspond to a Beauville threefold of general type.
\item There are $153$ groups of Beauville dimension $4$.  Non of them admit a rigid action on $\mathbb P^1$ and only $43$ of them a rigid action on an elliptic curve. The elliptic curve  is always  the Fermat cubic curve  and the branching signature is always equal to   
$[3,3,3]$. None of these 43 groups correspond to a Beauville 4-fold of general type, Kodaira dimension  $1$ or  $2$. 
\begin{enumerate}
\item Two out of the $43$ groups,  namely $\mathbb Z_3^2$ and $\rm{He}(3)$,  yield Beauville 4-folds of Kodaira dimension  $0$. They are quotients of a product 
$E_1\times E_2 \times E_3 \times E_4$ of four Fermat elliptic curves  $E_i=E$. 
In this case the extra rigidity condition 
\[
\big(H^1(\omega_{E_i}^{\otimes2})\otimes H^0(\omega_{E_j})\big)^G=0  \qquad \makebox{for all} \qquad i \neq j 
\]
can be fulfilled for both groups  $G=\mathbb Z_3^2$ and $G=\rm{He}(3)$ by suitable choices of generating triples. 
According to  \cite[Theorem 1.7]{BG21} there is a unique   Beauville 4-fold for each of these groups. The two 
 4-folds are  topologically distinct. 
\item The remaining $41$ groups  yield  Beauville $4$-folds 
\[
X=(E\times C_1\times C_2\times C_3)/G
\]
 of Kodaira dimension $3$, i.e. the curves $C_i$ have genus at least two and $E$ is the Fermat cubic curve as explained above. 
The extra rigidity condition  to be checked is 
\[
\big(H^1(\omega_{E}^{\otimes2})\otimes H^0(\omega_{C_i})\big)^G=0, \qquad \makebox{for all} \qquad 1\leq i \leq 3. 
\]
By \cite[Corollary 3.10]{BGK25} this condition holds true since 
$E\to E/G\simeq \mathbb P^1$ is branched  with signature  $[3,3,3]$. 
Below is a table with a structural description of these groups as subgroups of $\Aut(E)$, i.e. as 
semidirect products of the form $A \rtimes \mathbb Z_3$, where $A$ is an abelian group of translations and $\mathbb Z_3$ acts as a group of  rotations. 
The column $\rm{Id}$ contains the  MAGMA identifier: $\langle a, b \rangle$ denotes the b-th group of order $a$ in the MAGMA \emph{Database of Small Groups}. 

\begin{table}[!ht]
\begin{tabular}{cc}
 {\footnotesize
\begin{tabular}{| l | l | l |}
\hline
\rm{No.} & $G$ & $\rm{Id}$    \\
\hline
1 & $(\mathbb Z_2\times \mathbb Z_6)\rtimes \mathbb Z_3 $ & $ \langle 36, 11 \rangle$  \\
 \hline
2 & $ \mathbb Z_{21} \rtimes \mathbb Z_3$ & $\langle 63, 3 \rangle$   \\
 \hline
3 & $(\mathbb Z_3\times  \mathbb Z_9) \rtimes  \mathbb Z_3$ & $ \langle 81, 9 \rangle$   \\
\hline
4 & $\mathbb Z_6^2\rtimes  \mathbb Z_3$ & $ \langle 108, 22 \rangle $  \\
     \hline  
5 & $ \mathbb Z_{39} \rtimes  \mathbb Z_3$ & $\langle 117, 3 \rangle$  \\
 \hline
6 & $ (\mathbb Z_4 \times \mathbb Z_{12})\rtimes  \mathbb Z_3 $ & $  \langle 144, 68 \rangle$   \\
\hline
7 & $ \mathbb Z_{57} \rtimes  \mathbb Z_3 $ & $\langle 171, 4 \rangle $   \\
\hline
8 & $ ( \mathbb Z_{3}\times  \mathbb Z_{21})\rtimes  \mathbb Z_3$ & $\langle 189, 8 \rangle $   \\
 \hline
9 & $ (\mathbb Z_3 \times \mathbb Z_{15}) \rtimes \mathbb Z_3$ & $ \langle 225, 5 \rangle$   \\
\hline
10 & $\mathbb Z_9^2\rtimes  \mathbb Z_3$ & $ \langle 243, 26 \rangle$   \\
    \hline  
11 & $ (\mathbb Z_2 \times  \mathbb Z_{42}) \rtimes  \mathbb Z_3$ & $ \langle 252, 40 \rangle$    \\
 \hline
12 & $ \mathbb Z_{93} \rtimes  \mathbb Z_3$ & $\langle 279, 3 \rangle$   \\
\hline
13 & $( \mathbb Z_{6}\times  \mathbb Z_{18})\rtimes  \mathbb Z_3$ & $ \langle 324, 50 \rangle $   \\
\hline
14 & $ \mathbb Z_{111}\rtimes  \mathbb Z_3$ & $\langle 333, 4 \rangle$  \\
\hline
15 & $( \mathbb Z_{3}\times  \mathbb Z_{39})\rtimes  \mathbb Z_3$ & $ \langle 351, 8 \rangle $   \\
\hline
16 & $  \mathbb Z_{129} \rtimes  \mathbb Z_3$ & $\langle 387, 3 \rangle $ \\
\hline  
 17 & $\mathbb Z_{12}^2 \rtimes \mathbb Z_3 $ & $ \langle 432, 103 \rangle$    \\
 \hline
18 & $ \mathbb Z_{147} \rtimes  \mathbb Z_3$ & $\langle 441, 3 \rangle$   \\
\hline
19 & $ (\mathbb Z_7\times \mathbb Z_{21}) \rtimes  \mathbb Z_3$ & $ \langle 441, 12 \rangle $   \\
\hline
20 & $ (\mathbb Z_2 \times \mathbb Z_{78}) \rtimes  \mathbb Z_3$ & $\langle 468, 49 \rangle$  \\
   \hline   
21 & $( \mathbb Z_{3}\times  \mathbb Z_{57})\rtimes  \mathbb Z_3$ & $ \langle 513, 9 \rangle $   \\
\hline
 \end{tabular}}
&
 {\footnotesize
\begin{tabular}{| l | l | l |}
\hline
\rm{No.}& $G$ & $\rm{Id}$   \\
\hline
22 & $\mathbb Z_{183} \rtimes \mathbb Z_3$ & $\langle  549, 3 \rangle$   \\
     \hline  
 23   & $(\mathbb Z_{3}\times \mathbb Z_{63}) \rtimes \mathbb Z_3 $ & $ \langle 567, 13  \rangle$    \\
 \hline
24 & $(\mathbb Z_{8}\times \mathbb Z_{24}) \rtimes \mathbb Z_3 $ & $\langle 576, 1070 \rangle$   \\
\hline
25 & $\mathbb Z_{201} \rtimes \mathbb Z_3$ & $ \langle  603, 3 \rangle $  \\
\hline
26 & $\mathbb Z_{219} \rtimes \mathbb Z_3$ & $\langle 657, 4  \rangle$   \\
 \hline
27 & $\mathbb Z_{15}^2 \rtimes \mathbb Z_3$ & $ \langle  675, 12 \rangle $   \\
 \hline 
 28 & $(\mathbb Z_{2}\times \mathbb Z_{114}) \rtimes \mathbb Z_3$ & $\langle  684, 45  \rangle$   \\
\hline
29   & $\mathbb Z_{237} \rtimes \mathbb Z_3$ & $ \langle   711, 3 \rangle$   \\
 \hline
30 & $(\mathbb Z_{9}\times \mathbb Z_{27}) \rtimes \mathbb Z_3$ & $\langle  729, 95 \rangle$  \\
\hline
31 & $(\mathbb Z_{6}\times \mathbb Z_{42}) \rtimes \mathbb Z_3$ & $ \langle  756, 117  \rangle $  \\
\hline
32 & $\mathbb Z_{273} \rtimes \mathbb Z_3$ & $\langle  819, 9  \rangle$   \\
\hline
33 & $\mathbb Z_{273} \rtimes \mathbb Z_3$ & $ \langle  819, 10  \rangle $   \\
\hline
34 & $(\mathbb Z_{3}\times \mathbb Z_{93}) \rtimes \mathbb Z_3$ & $\langle  837, 8 \rangle$   \\
 \hline  
35  & $\mathbb Z_{291} \rtimes \mathbb Z_3 $ & $ \langle   873, 3  \rangle$    \\
 \hline
36 & $(\mathbb Z_{10}\times \mathbb Z_{30}) \rtimes \mathbb Z_3$ & $\langle  900, 141  \rangle$   \\
\hline
37 & $\mathbb Z_{309} \rtimes \mathbb Z_3 $ & $ \langle  927, 3  \rangle $   \\
\hline
38 & $\mathbb Z_{18}^2 \rtimes \mathbb Z_3$ & $\langle   972, 122  \rangle$   \\
\hline
39 & $\mathbb Z_{327} \rtimes \mathbb Z_3 $ & $ \langle  981, 4  \rangle $   \\
\hline
40 & $(\mathbb Z_{3}\times \mathbb Z_{111}) \rtimes \mathbb Z_3$ & $\langle  999, 9  \rangle$  \\
    \hline  
41 & $(\mathbb Z_{4}\times \mathbb Z_{84}) \rtimes \mathbb Z_3$ & $\langle  1008, 409  \rangle$   \\
     \hline  
     && \\
     \hline
 \end{tabular}}
\end{tabular}
\end{table}

\end{enumerate}
In summary our verifications show that all of the groups in the tables of Carta and Fairbairn yield Beauville manifolds. 
We suggest to  incorporate the additional rigidity conditions into the definition of the Beauville dimension, or alternatively, restrict to hyperbolic generating triples. Moreover, we want to point out that the uniqueness of the minimal realization, which in particular allows us to attach a unique group to a given Beauville manifold $X$ is only established if the Kodaira dimension is maximal. In case where $X$ has Kodaira dimension zero, we can uniquely attach the holonomy group of the underlying flat Riemannian manifold, which is a quotient of $G$.  The two $4$-folds in $(a)$ both have holonomy $\mathbb Z_3^2$, 
cf. \cite[Remark 5.10]{BG21}.   
\end{enumerate}
\end{Remark}

As we have seen,  there are Beauville manifolds of dimension  $\dim(X) \geq 3$ obtained by  actions which are not absolutely faithful. 
Thus it makes sense to generalize the definition of the Beauville dimension by allowing non-trivial kernels. 
In contrast to \cite{CF22} we decided to restrict to hyperbolic generating triples.

\begin{definition}
The \emph{Beauville dimension} $d(G)$ of a finite group  $G$ is the minimal positive integer $n \geq 2$, such that $\mathcal{UB}_n(G) \neq \emptyset$. 
If no such integer exists then the Beauville dimension of $G$ is set to be equal to $1$. 
\end{definition}
It is natural to  ask the question  if there are further groups of Beauville dimension $d(G) \geq 3$, using our definition. 
For this purpose we wrote a MAGMA algorithm to check if a given finite group $G$ admits an unmixed $n$-fold  Beauville structure. Since the presence of non-trivial kernels increases  the computational difficulty drastically, we restrict to groups of order less than or equal to $255$ and $n=3$. The algorithm also determines the Beauville dimension of the respective  groups according to our definition. We find:

\begin{Proposition}\label{Beauville255}
The groups $G$ of order less or equal to $255$, which admit an unmixed 3-fold Beauville structure are the following:

  \begin{table}[!ht]
\centering
 {\footnotesize
\begin{tabular}{| l | l | l | l |}
\hline
\rm{No.} & $G$ & $\rm{Id}$ & $d(G)$  \\
\hline
1 & $\mathbb Z_5^2$ & $\langle 25, 2 \rangle$& $2$  \\
 \hline
2 & $\mathbb Z_7^2$ & $\langle 49, 2 \rangle$& $2$  \\
 \hline
3 & $\mathfrak S_5$ & $ \langle 120, 34 \rangle$& $2$  \\
\hline
4 & $\mathbb Z_{11}^2$ & $\langle 121, 2 \rangle$& $2$  \\
\hline
5 & ${\rm He}(5)$ & $ \langle 125, 3 \rangle$& $2$  \\
\hline 
6 & $\mathbb Z_5^3$ & $  \langle 125, 5 \rangle$& $3$  \\
 \hline
7 & $\mathbb Z_2^4 . Q_8 $ & $\langle 128, 36 \rangle$& $2$  \\
\hline
8 & $\PSL(2,7) $ & $\langle 168, 42 \rangle$& $2$  \\
\hline
9 & $\mathbb Z_{13}^2$ & $\langle 169, 2 \rangle$& $2$  \\
\hline
10 & ${\rm SL}(2,5) \rtimes \mathbb Z_2 $ & $\langle 240, 90 \rangle$& $2$  \\
 \hline
11 & $ \mathcal A_5 \rtimes \mathbb Z_4  $ & $\langle 240, 91 \rangle$& $2$  \\
  \hline
12 & $  \mathbb Z_2 \times \mathfrak S_5  $ & $\langle 240, 189 \rangle$& $2$  \\
\hline
 13 & $\big(\mathbb Z_3 \times  {\rm He}(3)\big) \rtimes \mathbb Z_3  $ & $\langle  243, 3 \rangle$& $2$  \\
 \hline 
  14 & $\big(\mathbb Z_3 \times  M_{27}\big) \rtimes \mathbb Z_3  $ & $\langle  243, 4 \rangle$& $3$  \\
   \hline 
  15 & $\mathbb Z_3^3  \rtimes  \mathbb Z_9 $ & $\langle  243, 13 \rangle$& $3$  \\
     \hline  
 \end{tabular}}
\end{table} 
Here ${\rm He}(p)$ is the Heisenberg groups of order $p^3$ and $M_{27}$ is the unique non-abelian group of order $27$ which has an element of order $9$. 
\end{Proposition}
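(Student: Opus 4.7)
The proof is a systematic computer search carried out with the MAGMA implementation described in Remark \ref{rem: wrote_a MAGMA_script}. The plan is to iterate through every group $G$ of order at most $255$ in the Small Groups library and decide whether $\mathcal{UB}_3(G)$ is empty; for those $G$ for which it is not, we then also decide whether $\mathcal{UB}_2(G)$ is empty in order to distinguish $d(G) = 2$ from $d(G) = 3$.

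Concretely, for each $G$ I would first enumerate the ordered triples $(K_1, K_2, K_3)$ of normal subgroups of $G$ satisfying the pairwise minimality condition $K_j \cap K_k = \lbrace 1_G \rbrace$ for all $j \neq k$, which are the three intersection conditions obtained from Remark \ref{ReCond}(2) by omitting one index at a time. To cut down the enumeration I collapse these triples into $\Aut(G)$-orbits and keep one representative per orbit. For every representative I build the sets $\mathcal{T}(G/K_i) = \mathcal{S}(G/K_i)/(\Aut(G/K_i) \times \mathcal{B}_3)$ of hyperbolic spherical generators modulo twisted covering equivalence, either by direct computation or by querying the database of \cite{CGP23}. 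Finally, for every combination $(S_1, S_2, S_3)$ with $S_i$ a representative of a class in $\mathcal{T}(G/K_i)$, I test the freeness condition $\bigcap_{i=1}^{3}(\Sigma_{S_i}\cdot K_i) = \lbrace 1_G \rbrace$ of Remark \ref{ReCond}(3). If it holds for at least one such combination then $\mathcal{UB}_3(G) \neq \emptyset$ and $G$ is added to the list; otherwise $G$ is discarded.

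To assign the correct Beauville dimension to each surviving $G$, I run the analogous test for $n = 2$. In dimension two the minimality condition of Remark \ref{ReCond}(2) forces both kernels to be trivial, so this reduces to the classical Beauville-surface algorithm. Those groups that also admit a classical Beauville surface structure receive $d(G) = 2$, while the three remaining entries $\mathbb{Z}_5^3$, $(\mathbb{Z}_3 \times M_{27}) \rtimes \mathbb{Z}_3$ and $\mathbb{Z}_3^3 \rtimes \mathbb{Z}_9$ receive $d(G) = 3$. These are precisely the new cases not covered by the Carta--Fairbairn classification under the absolutely-faithful hypothesis; for $\mathbb{Z}_5^3$ one may cross-check against the discussion of $\mathbb{Z}_n^3$ in Section \ref{section: BeauvilleDim}.

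The main obstacle will be the combinatorial blow-up for the $3$-groups of order $243$, for $\mathbb{Z}_2^4.Q_8$ of order $128$, and for the three groups of order $240$: they possess many normal subgroups of small index, many short hyperbolic signatures on their quotients and sizeable outer automorphism groups, so both the kernel enumeration and the generating-triple enumeration must be pruned aggressively. To keep the search feasible I would precompute $\mathcal{T}(G/K_i)$ once per isomorphism type of quotient rather than once per kernel triple, discard signatures failing the hyperbolicity inequality of Remark \ref{triangle}(1) immediately, and apply the $\Aut(G)$-action on kernel triples as early as possible in the pipeline.
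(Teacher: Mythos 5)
Your overall strategy -- exhaust the Small Groups library up to order $255$, reduce kernel triples modulo $\Aut(G)$, use $\mathcal T(G/K_i)$ from \cite{CGP23}, test freeness, and then rerun with $n=2$ to separate $d(G)=2$ from $d(G)=3$ -- is the same computer search the paper performs. However, there is one concrete flaw in the pipeline as you describe it. You propose to test the freeness condition $\bigcap_{i=1}^{3}\Sigma_{S_i}\cdot K_i=\lbrace 1_G\rbrace$ only on \emph{one chosen representative} $S_i$ of each class in $\mathcal T(G/K_i)$, and to discard $G$ if every such combination fails. This is not valid: while $\Sigma_{S_i}$ is invariant under the braid action, it is \emph{not} invariant under $\Aut(G/K_i)$ -- one has $\Sigma_{\alpha_i\ast S_i}=\alpha_i(\Sigma_{S_i})$ -- and the automorphisms $\alpha_i$ applied to the different factors need not be induced by a single automorphism of $G$. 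Hence the truth of the freeness condition genuinely depends on which representatives of the classes $[S_1],[S_2],[S_3]$ you pick, and your test can produce false negatives (a group discarded even though some other choice of representatives in the same classes does yield a Beauville structure). The same defect affects your $n=2$ test, so the dimension labels could also come out wrong.

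The fix is exactly the content of Section~\ref{sec: computation_o_the_fibre}: for each tuple of classes $x=([S_1],[S_2],[S_3])$ you must range over the whole fibre $\eta^{-1}([x])$, i.e.\ over all tuples $(\alpha_1\ast S_1,\alpha_2\ast S_2,\alpha_3\ast S_3)$ with $(\alpha_1,\alpha_2,\alpha_3)$ taken modulo the action of $\Aut_{\mathcal K}(G)\times\prod_i\mathcal B\Aut(G/K_i,S_i)$ described in Remark~\ref{remAction}, and test freeness on each element of the fibre; $G$ is discarded only if \emph{every} element of \emph{every} fibre fails. You cite the script of Remark~\ref{rem: wrote_a MAGMA_script}, which does compute these fibres, but your explicit step-by-step description contradicts this and, taken literally, is an incorrect algorithm. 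With that correction (and keeping your sensible optimizations: early hyperbolicity pruning, caching $\mathcal T(G/K_i)$ per quotient isomorphism type, early $\Aut(G)$-reduction of kernel triples), your argument matches the paper's.
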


There are three groups of Beauville dimension $d(G) =3$ in our table. 
The smallest of them $\mathbb Z_5^3$ is the only abelian and moreover the only group  that does not appear in the table of Carta and Fairbairn. 
 This means that a Beauville structure of $\mathbb Z_5^3$ can only exist  with  non-trivial kernels. This   group is interesting for the following reasons:
\begin{remark} \
\begin{enumerate}
\item 
According to \cite{IFG} the  abelian  groups of Beauville dimension two are $\mathbb{Z}_n^2$, where  $\gcd(n, 6) = 1$.
 Carta and Fairbairn extended this result to higher dimensions. Using their definition,  they show that an abelian group of Beauville dimension greater than two has Beauville dimension 
 four and is isomorphic to $\mathbb Z_n^2$,  where  $\gcd(n, 2) = 1$, see \cite[Theorem 2.9]{CF22}. In contrast the group $\mathbb Z_5^3$ is not $2$-generated, which raises the 
 question of a general structure theorem of abelian groups $G$ of Beauville dimension $d(G) \geq 3$ in view of  our definition.
 \item 
Carta and Fairbairn point out that the order of the  groups of Beauville dimension greater than two   which are contained in their tables is always divisible by $3$. They ask if this is  a general fact \cite[Problem 4.3]{CF22}. Using our definition of Beauville dimension the group $\mathbb Z_5^3$ of Beauville dimension three gives a negative answer to this question. 
\end{enumerate}
 \end{remark}

\begin{Proposition}
The group $\mathbb Z_n^3$ has a
$3$-fold Beauville structure if and only if $\gcd(n, 6) = 1$.
\end{Proposition}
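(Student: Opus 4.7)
The plan is to apply the group-theoretic characterization of $3$-fold unmixed Beauville structures from Remark~\ref{ReCond}: such a structure on $G = \mathbb{Z}_n^3$ consists of subgroups $K_1, K_2, K_3 \leq G$ with $K_i \cap K_j = \{0\}$ for $i \neq j$, together with hyperbolic generating triples $S_i = [a_i, b_i, c_i]$ of each quotient $Q_i = G/K_i$, satisfying $\bigcap_{i=1}^3 \Sigma_{S_i} \cdot K_i = \{0\}$. A preliminary observation: because $c_i = -a_i - b_i$ in the abelian group $Q_i$, each $Q_i$ must be $2$-generated, and consequently every $K_i$ is nontrivial.

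For the direction $\gcd(n,6) = 1 \Rightarrow$ existence, I would use the symmetric construction $K_i = \langle e_i \rangle$, where $e_i$ is the $i$-th standard basis vector of $\mathbb{Z}_n^3$. The conditions $K_i \cap K_j = \{0\}$ then hold automatically and each $Q_i \cong \mathbb{Z}_n^2$. Since $\gcd(n,6) = 1$, the classical result that $\mathbb{Z}_n^2$ is a Beauville surface group supplies hyperbolic generating triples in every quotient. The task is to choose these triples so that the nine stabilizer planes $\langle e_i, a_i \rangle, \langle e_i, b_i \rangle, \langle e_i, c_i \rangle$ in $\mathbb{Z}_n^3$ are in sufficiently general position; the invertibility of small integers modulo $n$ provides the arithmetic flexibility. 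Verifying $\bigcap_i \Sigma_{S_i} K_i = \{0\}$ then reduces to checking that, for every choice $(j,k,l) \in \{1,2,3\}^3$, the triple intersection of the corresponding planes is trivial---a direct linear-algebra computation.

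For the converse, I would argue by contradiction. Suppose a Beauville structure exists while some prime $p \in \{2,3\}$ divides $n$. Via the Chinese Remainder decomposition $\mathbb{Z}_n^3 = \mathbb{Z}_{n_p}^3 \oplus \mathbb{Z}_{n/n_p}^3$, the entire data restricts to the $p$-primary factor and the freeness condition is preserved. The central analysis takes place on the $p$-torsion subgroup $T \cong \mathbb{F}_p^3$ of $\mathbb{Z}_{n_p}^3$. For each $i$, $T \cap \Sigma_{S_i} K_i^{(p)}$ is a union of subgroups of $T$; the crucial combinatorial fact is that in $\mathbb{F}_p^2$ with $p \in \{2,3\}$, a generating triple $[a,b,c]$ with $a+b+c=0$ has stabilizer set equal to all of $\mathbb{F}_2^2$ (when $p=2$) or covers $3$ out of $4$ one-dimensional subspaces of $\mathbb{F}_3^2$ (when $p=3$). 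A direct inclusion--exclusion computation then shows that each $T \cap \Sigma_{S_i} K_i^{(p)}$ covers at least $|T| - 1$ elements of $T$, so by pigeonhole $\bigcap_i (T \cap \Sigma_{S_i} K_i^{(p)})$ must contain nontrivial elements, contradicting freeness.

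The hardest part will be the converse direction: the $p$-primary reduction of a hyperbolic triple need not itself be hyperbolic, and components corresponding to generators of order coprime to $p$ collapse under reduction, so the analysis requires a case distinction on the possible dimensions of $\bar K_i^{(p)}$ together with a uniform counting estimate that covers all configurations. The constructive direction is more mechanical once the invertibility of $2$ and $3$ modulo $n$ is exploited to place the stabilizer planes in general position.
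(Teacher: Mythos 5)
Your overall strategy is sound and, in the constructive direction, coincides with the paper's: the authors also take $K_i=\langle e_i\rangle$, so that each $G/K_i\cong\mathbb Z_n^2$, and then write down three explicit generating triples of type $[n,n,n]$ whose stabilizer ``planes'' are in general position, the invertibility of $2$ and $3$ modulo $n$ being exactly what makes the $27$ triple intersections trivial; where you leave the choice of triples as a general-position argument, the paper records concrete triples and checks the intersection by hand, so you would still need to exhibit such a choice to complete this half. In the converse direction you and the paper agree on the first reduction (project to the $p$-Sylow factor $\mathbb Z_{p^{k}}^3$, using that stabilizer sets only shrink under projection and that $K_i\cap G_p$ is proper because $\mathbb Z_{p^{k}}^3$ is not $2$-generated), but then you diverge: the paper multiplies by $p^{k-1}$ to reduce to $\mathbb Z_p^3$ and finally excludes $p=2,3$ by a direct (computer-assisted) check, whereas you propose a self-contained covering argument on the $p$-torsion subgroup $T\cong\mathbb F_p^3$. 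Your route has the merit of eliminating the final computation, and it does close: since $Q_i=G_p/K_i$ is $2$-generated, $T\cap K_i\neq 0$, the image of $T$ in $Q_i$ is $\mathbb F_p$ or $\mathbb F_p^2$, and in the latter case the relation $\bar a_i+\bar b_i+\bar c_i=0$ together with generation forces three distinct lines. However, your quantitative claim that each set $T\cap(\Sigma_{S_i}+K_i)$ misses at most one element of $T$ is wrong for $p=3$: three lines cover $7$ of the $9$ elements of $\mathbb F_3^2$, so the preimage covers only $21$ of the $27$ elements of $T$. The pigeonhole still succeeds with the corrected count ($27-3\cdot 6=9>1$), but as stated the bound needs repair, and the case analysis you defer (rank of $T\cap K_i$, possible vanishing of reduced generators) is precisely where the work lies.
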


\newcommand*{\thead}[1]{\multicolumn{1}{|c|}{#1}}

\begin{proof}
Assume that $\gcd(n, 6) = 1$.
We choose the kernels $K_i=\langle e_i \rangle$ for $G = \mathbb Z_n^3 = \langle e_1,e_2,e_3 \rangle$ and the following generating triples of the quotient groups $G/K_i$ of type $[n,n,n]$: 

\begin{table}[!ht] \centering
 {\footnotesize
\begin{tabular}{| l | l | l |}
\hline
\thead{i} & $S_i$ & $\Sigma_{S_i} +  K_i$  \\
\hline
1 & $[e_2 - e_3,e_2 + e_3,-2 e_2]$ & $\lbrace (l,j,k) \in G ~ \vert ~ j=-k ~ \makebox{or} ~  j=k ~ \makebox{or} ~ j=0 \rbrace$ \\
\hline
2 & $[e_1 + e_3,e_3 - e_1,-2 e_3]$ & $\lbrace (l,j,k) \in G ~ \vert ~ l=k ~ \makebox{or} ~  l=-k ~ \makebox{or} ~ k=0 \rbrace $ \\
\hline
3 & $[2e_1+e_2,e_1,-3 e_1 - e_2]$ & $\lbrace (l,j,k) \in G ~ \vert ~ l=2j ~ \makebox{or} ~  l=0 ~ \makebox{or} ~ l=3j \rbrace $ \\
\hline
\end{tabular}}
\end{table} 
To verify the freeness condition, we consider an element $(l,j,k)\in \bigcap_{i=1}^3 (\Sigma_{S_i} + K_i) $ and show that it is trivial. If $j=0$, we conclude directly from the third row that $l=0$ and consequently from the second row $k=0$, since $\gcd(n,6)=1$. 
On the other hand, if $j \neq 0$, the three conditions yield $0 \neq j= \pm k = \pm l = \pm m \cdot j$ with $m \in \{ 2,3 \}$, leading to a contradiction. \\
Suppose that $G=\mathbb Z_n^3$ has a
$3$-fold Beauville structure. We show that $\gcd(n, 6) = 1$. \\
{\bf Step 1.} Reduction to the case  $\mathbb Z_{p^k}^3$.
The group $G$ is the product of its Sylow subgroups 
\[
G=\bigoplus_{p ~ prime } G_p, \qquad \makebox{with } \qquad G_p = \mathbb Z_{p^{k_p}}^3.
\]
This implies 
\[
G/K_i=\bigoplus_{p ~ prime } G_p/(K_i\cap G_p). 
\]

If the group $G/K_i$ has the generating triple $S_i:=[x_i,y_i,z_i]$ (from a Beauville structure of $G$), then  
$G_p/(K_i\cap G_p)$  has the generating triple $\pi_p(S_i)$ 
consiting of the projections  $\pi_p(x_i)$, $\pi_p(y_i)$ and $\pi_p(z_i)$. Let $\Sigma_{S_i}$  and  $\Sigma_{\pi_p(S_i)}$ be the corresponding  stabilizer sets. We have  
$\Sigma_{\pi_p(S_i)} \subset \Sigma_{S_i}$ because  the projections $\pi_p(x_i)$, $\pi_p(y_i)$ and $\pi_p(z_i)$ are multiples of $x_i$, $y_i$ and $z_i$. Therefore, we have   
\[
\Sigma_{\pi_p(S_i)}+(K_i \cap  G_p)  \subset \Sigma_{S_i} + K_i. 
\]
Note that  $K_i \cap  G_p$ is a proper subgroup of  $G_p$. Otherwise, if  $K_1 \cap  G_p =G_p$, then  $K_2 \cap  G_p=0$ by minimality of the action and therefore  $$G_p/(K_2\cap G_p)=G_p= \mathbb Z_{p^{k_p}}^3.$$ A contradiction, since the group $\mathbb Z_{p^{k_p}}^3$ is not 2-generated. Thus we have shown that $\mathbb Z_{p^{k_p}}^3$ admits a 3-fold Beauville structure for all primes $p$ dividing $n$.  \\
{\bf Step 2.} Reduction to $\mathbb Z_{p}^3$. By the first step we may assume that $G=\mathbb Z_{p^k}^3$.
Since $G/K_i$ is 2-generated, it holds   
\[
G/K_i= \mathbb Z_{p^{a_i}} \times \mathbb Z_{p^{b_i}}  , \quad \makebox{where} \quad k \geq a_i \geq b_i \geq 0. 
\]
Here $b_i=0$ is possible, but $a_i \neq 0$ according to the remark at the end of the first step. 
Since 
$G/K_i$ is $2$-generated the kernel $K_i$ must have an element of order  $p^k$. We use this fact to show  $a_i=k$. Assume $a_1 <k$, then $p^{k-1}G \subset K_1$. Let  $g \in K_2$ be an element of order  $ord(g)=p^k$, then  
$$0\neq p^{k-1}g\in (p^{k-1}G) \cap K_2  \subset K_1 \cap K_2 =0.$$ A contradiction.  Thanks to this argument we have 
\[
G/K_i= \mathbb Z_{p^{k}} \times \mathbb Z_{p^{b_i}}  , \quad \makebox{with} \quad k \geq b_i \geq 0. 
\]
This shows that 
\[
\frac{p^{k-1}G}{ (p^{k-1}G) \cap K_i} \simeq p^{k-1} \cdot (G/K_i) \quad \simeq \quad  \mathbb Z_p \quad \makebox{or} \quad  \mathbb Z_p^2.
\]
Now let  $S_i=[x_i,y_i,z_i]$ be a generating triple of $G/K_i$, then  the elements of the triple 
\[
[p^{k-1}x_i,p^{k-1}y_i,p^{k-1}z_i]
\]
 generate the group  
$p^{k-1} \cdot (G/K_i)$. This generating triple  has at least 2 elements different from zero.  
It holds 
\[
(p^{k-1} \cdot \Sigma_{S_i})+ (K_i \cap p^{k-1}G) \subset \Sigma_{S_i} + K_i.
\]
Here the set  $(p^{k-1} \cdot \Sigma_{S_i})$ coincides with the stabilizer set of  
\[
[p^{k-1}x_i,p^{k-1}y_i,p^{k-1}z_i].
\]
A priori one of the elements of this triple might be zero, but it never happens. Assume otherwise and  $i=1$, then 
 $$(p^{k-1}\cdot \Sigma_{S_1})+(K_1\cap p^{k-1}G)=p^{k-1}G\cong \mathbb Z_p^3.$$
We observe that the intersection of the sets $(p^{k-1}\cdot \Sigma_{S_j})+(K_j\cap p^{k-1}G)$ for $j=2$ and $3$ must be trivial. This implies that \[\frac{p^{k-1}\cdot G}{(K_2\cap p^{k-1}G)+(K_3\cap p^{k-1}G)}\cong \mathbb Z_p
\]
is a Beauville surface group, which is a contradiction. 
In summary this shows that  $p^{k-1}\cdot G\cong \mathbb Z_p^3$ admits a 3-fold Beauville structure. 

{\bf Step 3.} It remains to exclude the groups  $G=\mathbb Z_p^3$ for the primes  $p=2$ and $p=3$.   
This is a straightforward computation  by hand  or by MAGMA.
\end{proof}

\bigskip

\section{Proof of the main theorems}\label{section: Examples}

 We will now apply our implementation of the method outlined in Section \ref{sec: computation_o_the_fibre} to proof the main theorems from the introduction.

\begin{proof}[Proof of Theorem \ref{THMI} and \ref{THMII}] 
According to Proposition \ref{Beauville255} we know that  $\mathcal U B_3(G)=\emptyset$ for all groups $G$ of order $\vert G \vert \leq 25$ except for $G=\mathbb Z_5^2=\langle e_1, e_2 \rangle$.  
Here the types of the generating triples $S_i$ defining $X$ are $[5,5,5]$.  By the Hurwitz formula, the possible genera are   either $6$ or $2$, depending on the kernel $K_i$ being  trivial or isomorphic to $\mathbb Z_5$. 
Using Proposition \ref{Global_Invar} we see that the only possible values of the holomorphic Euler number are $\chi(\mathcal{O}_X)=-1$ or $-5$. More precisely,  $\chi(\mathcal{O}_X) = -1$ occurs when two of the three kernels are trivial and   $\chi(\mathcal{O}_X) = -5$   when all kernels are trivial.
In the first case,   we can reorder $K_i$ in such a way that $K_1=K_2=\lbrace 0\rbrace$ and only the third one is not trivial. Furthermore, since the automorphism group $\GL(2,5)$ of $\mathbb Z_5^2$ is acting transitively on $\mathbb Z_5^2$, we can assume $K_3=\langle e_2\rangle$.
There is only one  $\GL(2,5)\times \mathcal B_3$ orbit  on the set of  spherical system of generators of $\mathbb Z_5^2$ with signature $[5,5,5]$. It is represented by $S:=[e_1,e_2,4e_1+4e_2]$. Similarly for  $\overline{G}_3\cong \mathbb Z_5$ the set 
$\mathcal T(\overline{G}_3)$ consists of only one equivalence class  represented by $S'=[e_1,e_1,3e_1]$.
Running  the script presented in Section \ref{sec: computation_o_the_fibre}  for $K=(\langle 0\rangle,\langle 0\rangle, \langle e_2 \rangle)$ and $S=[S,S, S']$ we obtain 8 biholomorphism classes of unmixed Beauville 3-folds with $\chi(\mathcal O_X)=-1$ represented by the  Beauville structures in Table \ref{tab: Beauv_chi=-1}.

  \begin{table}[!ht]
\centering
 {\footnotesize
\begin{tabular}{|l|l|l|l|l|l|l|l|l|l|}
\hline
 & $S_1$ & $S_2$ & $S_3$ & $h^{3,0}$ & $h^{2,0}$ & $h^{1,0}$ & $h^{1,1}$ & $h^{2,1}$ \\
\hline
1 & $[ e_2, 3e_1 + 3e_2, 2e_1 + e_2 ]$ & $[4e_1 + 3e_2 ,3e_1, 3e_1+2e_2]$ & $[3e_1,3e_1,4e_1]$ & 2 & 0 & 0 & 7 & 10 \\
\hline
2 & $[e_2,3e_1 + 3e_2,2e_1+e_2]$ & $[e_1+4e_2,3e_1+e_2,e_1]$  & $[e_1,e_1,3e_1]$ & 2 & 0 & 0 & 7 & 10  \\
\hline
3 & $[4e_1+3e_2, 4e_2,e_1+3e_2]$ & $[4e_1+e_2,4e_1+4e_2,2e_1]$ & $[2e_1,2e_1,e_1]$ & 3 & 1 & 0 & 5 & 9 \\
\hline
4 & $[3e_1+e_2,3e_1+4e_2,4e_1]$ & $[3e_2,4e_1+e_2,e_1+e_2]$ & $[4e_1,4e_1,2e_1]$ & 3 & 1 & 0 & 5 & 9 \\
\hline
5 & $[2e_1+2e_2,2e_1+4e_2,e_1+4e_2]$ & $[2e_2,4e_1,e_1+3e_2]$ & $[4e_1,4e_1,2e_1]$ & 3 & 1 & 0 & 5 & 9 \\
\hline
6 & $[e_1+e_2,e_1+2e_2,3e_1+2e_2]$ & $[2e_2, 4e_1,e_1+3e_2]$ & $[2e_1,2e_1,e_1]$ & 3 & 1 & 0 & 5 & 9  \\
\hline
7 & $[4e_1+4e_2,4e_1+3e_2,2e_1+3e_2]$  & $[e_2,2e_1,3e_1+4e_2]$ & $[e_1,e_1,3e_1]$ & 4 & 2 & 0 & 3 & 8 \\
\hline
8 & $[e_2,2e_1+2e_2,3e_1+2e_2]$ & $[e_1+4e_2,3e_1+e_2,e_1]$ & $[3e_1,3e_1,4e_1]$ & 4 & 2 & 0 & 3 & 8 \\ 
\hline
\end{tabular}}
\caption{The $8$ biholomorphism classes of unmixed Beauville 3-folds with $G=\mathbb Z_5^2$ and $\chi(\mathcal O_X)=-1$ and their Hodge numbers.}
\label{tab: Beauv_chi=-1}
\end{table}

The  Hodge numbers  are computed using the generating triples as explained in \cite[Theorem 3.7]{FG16}. 
It remains to classify the unmixed Beauville manifolds $X$  with 
$\chi(\mathcal O_X) \in \lbrace -5,-4,-3,-2 \rbrace$ 
under the assumption that the $G$-action is absolutely faithful. 
By Hurwitz's bound we have $\vert G \vert \leq 84(g_i-1)$, where $g_i=g(C_i)$. In combination with the formula for 
$\chi(\mathcal O_X)$ from Proposition \ref{Global_Invar} we obtain a bound for the group order in terms of the holomorphic Euler number: 
\[
N:=\vert G \vert \leq \lfloor 168 \sqrt{-21\chi(\mathcal O_X)} \rfloor. 
\]
This already shows the finiteness of the classification, which is performed with  MAGMA. 
To make the algorithm  more efficient, we invoke some additional combinatorics, as explained in \cite{FG16}. 
Since  $(g_i-1)$ is a divisor of  $N\cdot \chi(\mathcal O_X)$ we can create  for fixed value of $\chi=\chi(\mathcal O_X)$ a list of $4$-tuples for the possibilities of the group order and the genera 
\[
[N,g_1,g_2,g_3]. 
\]
 For each $4$-tuple we determine the possible types $T_i=[m_{i,1},m_{i,2},m_{i,3}]$ of the generating triples in the Beauville structures. 
The entries $m_{i,j}\geq 2$ are divisors of $N$, fulfill the Hurwitz formula (cf. Remark \ref{triangle}) and the following additional combinatorial constraints:
\[
m_{i,j} \vert (g_{[i+1]}-1)(g_{[i+2]}-1) \qquad \makebox{and} \qquad  m_{i,j} \leq 4g_i+2,  
\]
see \cite[Prop. 4.8]{FG16}. 
This allows us to determine a list of $4$-tuples  $$[N,T_1,T_2,T_3]$$ of possible group orders and types. Not all of them will occur. However, any 
Beauville structure 
$[S_1,S_2,S_3]$ attached 
to an unmixed Beauville threefold $X$ with  $\chi=\chi(\mathcal O_X)$ obtained from an absolutely faithful  $G$-action yields a tuple 
 $$[\vert G \vert, T(S_1),T(S_2),T(S_3)]$$ in this  list. For each $[N,T_1,T_2,T_3]$ we run through the groups $G$ of order $N$ and check if there is an 
 unmixed Beauville structure $[S_1,S_2,S_3]$, with $S_i\in \mathcal S(G)$ and $T_i=T(S_i)$. 
In case, we determine for each  $T_i$  a representative of each orbit of the $\Aut(G)\times \mathcal B_3$-action  on the set of generating triples  $S_i$ of  $G$ with type $T_i$. 
The method from  Section \ref{sec: computation_o_the_fibre} is then used to 
determine  the $\Aut(G)\times (\mathcal B_3 \wr \mathfrak S_3)$ orbits, i.e. the number $\mathcal N$ of biholomorphism classes. The output is sumarized in Table \ref{tab: classification}.
   
\begin{table}[!ht]
\centering
 {\footnotesize
\begin{tabular}{|l|l|l|l|l|l|l|l|l|l|l|l|}
\hline
 & $G$ & $T_1$ & $T_3$ & $T_2$ & $h^{3,0}$ & $h^{2,0}$ & $h^{1,0}$ & $h^{1,1}$ & $h^{1,2}$ & $\chi$ & $\mathcal N$ \\
\hline
1 & $\mathfrak S_5$ & $[2,5,4]$ & $[2,6,5]$ & $[3,4,4]$ &$4$  &  $1$& $0$ &  $5$& $12$ & $-2$& $1$\\
\hline
2 & $\PSL(2,7)$ & $[2,3,7]$ & $[3,3,4]$ & $[7,7,7]$ &$6$  & $1$ & $0$ & $11$ &$24$  & $-4$& $2$ \\
\hline
3 & $\PSL(2,7)$ & $[2,3,7]$ & $[3,3,4]$ & $[7,7,7]$ &$9$  & $4$ & $0$ & $5$ &$21$  & $-4$& $2$ \\
\hline
4 & $\PSL(2,7)$ & $[2,3,7]$ & $[4,4,4]$ & $[3,3,7]$ & $6$ &  $1$& $0$ & $7$ & $20$ & $-4$& $2$ \\
\hline
5 & $\PSL(2,7)$ & $[2,3,7]$ & $[4,4,4]$ & $[3,3,7]$ & $7$ &  $2$& $0$ & $5$ & $19$ & $-4$& $2$ \\
\hline
6 & $\mathfrak S_5$ & $[2,5,4]$ & $[3,4,4]$ & $[3,6,6]$ & $8$ &  $2$&  $0$&  $7$&  $24$& $-5$& $1$\\
\hline
7 & $\mathbb Z_5^2$ & $[5,5,5]$ & $[5,5,5]$ & $[5,5,5]$ & $6$ & $0$ & $0$ & $15$ & $30$& $-5$ & $2$\\
\hline
8 & $\mathbb Z_5^2$ & $[5,5,5]$ & $[5,5,5]$ & $[5,5,5]$ & $7$ & $1$ & $0$ & $13$ & $29$& $-5$ & $3$\\
\hline
9 & $\mathbb Z_5^2$ & $[5,5,5]$ & $[5,5,5]$ & $[5,5,5]$ & $7$ & $1$ & $0$ & $17$ & $33$& $-5$ & $1$\\
\hline
10 & $\mathbb Z_5^2$ & $[5,5,5]$ & $[5,5,5]$ & $[5,5,5]$ & $8$ & $2$ & $0$ & $11$ & $28$& $-5$ & $13$\\
\hline
11 & $\mathbb Z_5^2$ & $[5,5,5]$ & $[5,5,5]$ & $[5,5,5]$ & $8$ & $2$ & $0$ & $15$ & $32$& $-5$ & $3$\\
\hline
12 & $\mathbb Z_5^2$ & $[5,5,5]$ & $[5,5,5]$ & $[5,5,5]$ & $9$ & $3$ & $0$ & $9$ & $27$& $-5$ & $14$\\
\hline
13 & $\mathbb Z_5^2$ & $[5,5,5]$ & $[5,5,5]$ & $[5,5,5]$ & $9$ & $3$ & $0$ & $13$ & $31$& $-5$ & $4$\\
\hline
14 & $\mathbb Z_5^2$ & $[5,5,5]$ & $[5,5,5]$ & $[5,5,5]$ & $10$ & $4$ & $0$ & $7$ & $26$& $-5$ & $12$\\
\hline
15 & $\mathbb Z_5^2$ & $[5,5,5]$ & $[5,5,5]$ & $[5,5,5]$ & $10$ & $4$ & $0$ & $11$ & $30$& $-5$ & $8$\\
\hline
16 & $\mathbb Z_5^2$ & $[5,5,5]$ & $[5,5,5]$ & $[5,5,5]$ & $11$ & $5$ & $0$ & $5$ & $25$& $-5$ & $3$\\
\hline
17 & $\mathbb Z_5^2$ & $[5,5,5]$ & $[5,5,5]$ & $[5,5,5]$ & $11$ & $5$ & $0$ & $9$ & $29$& $-5$ & $7$\\
\hline
18 & $\mathbb Z_5^2$ & $[5,5,5]$ & $[5,5,5]$ & $[5,5,5]$ & $12$ & $6$ & $0$ & $3$ & $24$& $-5$ & $4$\\
\hline
19 & $\mathbb Z_5^2$ & $[5,5,5]$ & $[5,5,5]$ & $[5,5,5]$ & $12$ & $6$ & $0$ & $7$ & $28$& $-5$ & $3$\\
\hline
\end{tabular}}
\caption{Beauville 3-folds $X$ with $\chi(\mathcal O_X)\in \lbrace -5,-4,-3,-2 \rbrace $ obtained by an absolutely faithful action.}
\label{tab: classification}
\end{table}
\end{proof}

\end{document}